\newtheorem{theorem}{Theorem}[section]
\newtheorem{lemma}[theorem]{Lemma}
\newtheorem{proposition}[theorem]{Proposition}
\newtheorem{corollary}[theorem]{Corollary}
\theoremstyle{definition}
\newtheorem{definition}[theorem]{Definition}
\newtheorem{example}[theorem]{Example}
\newtheorem{remark}[theorem]{Remark}
\theoremstyle{remark}
\numberwithin{equation}{section}
\newcommand{\na}{\operatorname{NA}}
\newcommand{\amp}{\operatorname{AMp}}
\newcommand{\call}{\mathcal{L}}
\newcommand{\calf}{\mathcal{F}}
\newcommand{\calk}{\mathcal{K}}
\newcommand{\bbk}{\mathbb{K}}
\newcommand{\bbc}{\mathbb{C}}
\newcommand{\bbr}{\mathbb{R}}
\newcommand{\ma}{\operatorname{MA}}
\newcommand{\qna}{\operatorname{QNA}}
\newcommand{\qma}{\operatorname{QMA}}
\newcommand{\id}{\operatorname{Id}}
\newcommand{\bl}{\mathcal{BL}}
\newcommand{\iso}{\mathcal{ISO}}
\newcommand{\bbn}{\mathbb{N}}
\newcommand{\Lo}{\mathbf{L}_o}
\newcommand{\Loo}{\mathbf{L}_{o,o}}
\newcommand{\sss}{\mathcal{SS}}
\newcommand{\conv}{\operatorname{conv}}
\newcommand{\aconv}{\operatorname{aconv}}
\newcommand{\vertiii}[1]{{\left\vert\kern-0.25ex\left\vert\kern-0.25ex\left\vert #1 
\right\vert\kern-0.25ex\right\vert\kern-0.25ex\right\vert}}
\begin{document}
\title[On density and BPBp type properties for the minimum norm]{On density and Bishop-Phelps-Bollob\'as type properties for the minimum norm}

\author[D.\ Garc\'{\i}a]{Domingo Garc\'{\i}a}
\address[Domingo Garc\'{\i}a]{Departamento de An\'{a}lisis Matem\'{a}tico,
Universidad de Valencia, Doctor Moliner 50, 46100 Burjasot (Valencia), Spain.
\href{http://orcid.org/0000-0002-2193-3497}{ORCID: \texttt{0000-0002-2193-3497}}}
\email{domingo.garcia@uv.es}

\author[M.\ Maestre]{Manuel Maestre}
\address[Manuel Maestre]{Departamento de An\'{a}lisis Matem\'{a}tico,
Universidad de Valencia, Doctor Moliner 50, 46100 Burjasot
(Valencia), Spain.
\href{http://orcid.org/0000-0001-5291-6705}{ORCID: \texttt{0000-0001-5291-6705}}}
\email{manuel.maestre@uv.es}

\author[M.\ Mart\'{\i}n]{Miguel Mart\'{\i}n}
\address[Miguel Mart\'{\i}n]{Departamento de An\'{a}lisis Matem\'{a}tico e Instituto de Matemáticas (IMAG), Universidad de Granada, 18071 Granada, Spain.
\href{http://orcid.org/0000-0003-4502-798X}{ORCID: \texttt{0000-0003-4502-798X}}}
\email{mmartins@ugr.es}

\author[Rold\'an]{\'Oscar Rold\'an}
\address[Rold\'an]{Department of Mathematics Education, Dongguk University, 04620 Seoul, Republic of Korea. 
\href{https://orcid.org/0000-0002-1966-1330}{ORCID: \texttt{0000-0002-1966-1330}}}
\email{\texttt{oscar.roldan@uv.es}}

\thanks{}

\date{\today}
\keywords{Minimum-attaining, Quasi minimum-attaining, Bishop-Phelps-Bollob\'as, Radon-Nikodym property}
\subjclass[2020]{46B04 (primary), 46B03, 46B20, 46B22, 46B25 (secondary)}

\begin{abstract}
We study the set $\ma(X,Y)$ of operators between Banach spaces $X$ and $Y$ that attain their minimum norm, and the set $\qma(X,Y)$ of operators that quasi attain their minimum norm. We characterize the Radon-Nikodym property in terms of operators that attain their minimum norm and obtain some related results about the density of the sets $\ma(X,Y)$ and $\qma(X,Y)$. We show that every infinite-dimensional Banach space $X$ has an isomorphic space $Y$ such that not every operator from $X$ to $Y$ quasi attains its minimum norm. We introduce and study Bishop-Phelps-Bollob\'as type properties for the minimum norm, including the ones already considered in the literature, and we exhibit a wide variety of results and examples, as well as exploring the relations between them.
\end{abstract}
\maketitle

\section{Introduction}\label{section:introduction}

Let $X,Y$ be Banach spaces over the field $\bbk=\bbr$ or $\bbc$. Let $\call(X, Y)$, $\calk(X,Y)$ and $\calf(X,Y)$ respectively denote the spaces of bounded and linear operators, of compact operators, and of finite-rank operators from $X$ to $Y$. We denote by $B_X$, $S_X$ and $X^*$ the closed unit ball of $X$, the unit sphere of $X$, and the topological dual of $X$, respectively. Classical Banach sequence spaces and their finite-dimensional versions will also use standard notations, such as $c_0$, $\ell_p$ ($1\leq p\leq \infty$), and $\ell_p^n$ ($1\leq p\leq \infty$, $n\in\bbn$).

\subsection{Background on norm-attaining operators}

An operator $T\in\call(X,Y)$ is said to \textit{attain its norm} if there is $x\in S_X$ such that $\|T(x)\|=\|T\|$. The set of norm-attaining operators from $X$ to $Y$ will be denoted by $\na(X,Y)$. A cornerstone in Functional Analysis is a result by R. C. James that shows that a Banach space $X$ is reflexive if and only if $\na(X, \bbk)=X^*$ (see for instance \cite[Corollary 3.131]{FHHMZ11}). In 1961, E. Bishop and R. R. Phelps showed that for every Banach space $X$, the set $\na(X,\bbk)$ is dense in $X^*$ (see for instance \cite[Theorem 7.41]{FHHMZ11}), and they wondered whether this density also holds for operators from $X$ to $Y$. In 1963 J. Lindenstrauss answered that question in the negative in his seminal paper \cite{Lindenstrauss63}. He also showed that the density holds for some classes of Banach spaces, such as when $X$ is reflexive or when $Y$ has a geometrical property known as \textit{property $\beta$ of Lindenstrauss} (see Section \ref{section:bpbpm} for the definition). Since then, the study of when $\na(X,Y)$ is a dense subset of $\call(X,Y)$ has interested many researchers, and is still nowadays a very fruitful and active line of research in Functional Analysis.
We refer to \cite{Acosta06,CKL24} for surveys with the most important results on the density of norm-attaining operators. It is worth noting that it is still open nowadays whether $\na(X,Y)$ is always dense in $\call(X,Y)$ when $Y$ is finite-dimensional.

Recall that a Banach space $X$ has the Radon-Nikodym property (RNP) if every non-empty closed and bounded subset $X$ of $X$ is dentable (we refer to \cite[p. 217]{DU77} for several characterizations of this property). A Banach space $X$ is said to have \textit{property $A$ of Lindenstrauss} (respectively, \textit{property $B$ of Lindenstrauss}) if for every Banach space $Y$, $\na(X,Y)$ is dense in $\call(X,Y)$ (respectively, $\na(Y,X)$ is dense in $\call(Y,X)$). As a consequence of the work of J. Bourgain and R. E. Huff, it is known that a Banach space $X$ has the RNP if and only if every Banach space isomorphic to $X$ has property $A$ of Lindenstrauss (see \cite{Bourgain77,Huff80}). However, let us note that there are Banach spaces having property $A$ of Lindenstrauss but not the RNP (see \cite[Proposition 3.1]{Schachermayer83}), and there are Banach spaces with RNP but failing property $B$ of Lindenstrauss (see \cite[Appendix]{Gowers90}).

An operator $T\in\call(X,Y)$ is said to be \textit{quasi norm-attaining} if there is a sequence $\{x_n\}_n\subset S_X$ and a point $y\in \|T\|\cdot S_Y$ such that $T(x_n)$ converges to $y$. The class of quasi norm-attaining operators from $X$ to $Y$, $\qna(X,Y)$, was introduced and studied in \cite{CCJM22}. Analogously to properties $A$ and $B$ of Lindenstrauss, a Banach space has \textit{property quasi-$A$} (respectively, \textit{property quasi-$B$}) if for all Banach spaces $Y$, $\qma(X,Y)$ is dense in $\call(X,Y)$ (respectively, $\qma(Y,X)$ is dense in $\call(Y,X)$). Although the RNP does not imply property $B$ of Lindenstrauss, it has been shown in \cite[Corollary 3.8]{CCJM22} that a Banach space $X$ has the RNP if and only if every Banach space $Z$ isomorphic to $X$ has properties quasi-$A$ and quasi-$B$.

In 1970, B. Bollob\'as showed that for a Banach space $X$, not only the norm-attaining functionals are dense on $X^*$, but actually whenever a functional $x^*\in X^*$ almost attains its norm at a point $x\in S_X$, a nearby functional $y^*\in X^*$ attains its norm at a nearby point $y\in S_X$ (see for instance \cite[Exercise 7.53]{FHHMZ11}). This quantitative result for functionals is known nowadays as the Bishop-Phelps-Bollob\'as theorem. In 2008, M. D. Acosta, R. M. Aron, D. Garc\'{\i}a, and M. Maestre introduced a version of the Bishop-Phelps-Bollob\'as theorem for operators as follows.

\begin{definition}[{\cite[Definition 1.1]{AAGM08}}]
A pair of Banach spaces $(X,Y)$ is said to have the \textit{Bishop-Phelps-Bollob\'as property} (or just BPBp) if given $\varepsilon>0$ there is some $\eta=\eta(\varepsilon)>0$ such that whenever $T\in S_{\call(X,Y)}$ and $x\in S_X$ satisfy that $\|T(x)\|>1-\eta$, there exist $S\in S_{\call(X,Y)}$ and $y\in S_X$ such that $\|S(y)\|=1$, $\|S-T\|<\varepsilon$, and $\|x-y\|<\varepsilon$.
\end{definition}

It is known, for instance, that if $Y$ has property $\beta$ of Lindenstrauss (\cite[Theorem 2.2]{AAGM08}), if $X$ is uniformly convex (\cite[Theorem 3.1]{KL14}), or if both $X$ and $Y$ are finite-dimensional (\cite[Proposition 2.4]{AAGM08}), then the pair $(X,Y)$ has the BPBp. It is also known that there are pairs of Banach spaces $X$ and $Y$ such that $(X,Y)$ fails the BPBp despite the fact that $\na(X,Y)$ is dense in $\call(X,Y)$ (see \cite[Proposition 3.9 and Theorem 4.1]{AAGM08} and \cite[Theorem 7]{Bourgain77} for instance). Several versions of the BPBp have been considered in the literature. We will recall some of these versions, as they will be mentioned in Section \ref{section:bpbpm}.

\begin{remark}
A pair of Banach spaces $(X, Y)$ has a Bishop-Phelps-Bollob\'as type property if for every $\varepsilon>0$, $x\in S_X$, and $T\in S_{\call(X, Y)}$, there is some $\eta(\varepsilon, x, T)>0$ such that if the condition $\|T(x)\|\geq 1-\eta$ is met, then there exist $S\in S_{\call(X, Y)}$ and $y\in S_X$ such that $\|S(y)\|=1$, $\|S-T\|<\varepsilon$, and $\|x-y\|<\varepsilon$. In particular,
\begin{itemize}
\item If $\eta=\eta(\varepsilon)$ does not depend on $x$ and $T$, then $(X, Y)$ has the BPBp.
\item If $\eta=\eta(\varepsilon, T)$ depends on $T$ but not on $x$, then $(X, Y)$ has the \textit{\textbf{L}$_o$ property}.
\item If $\eta=\eta(\varepsilon, T)$ depends on $T$ but not on $x$, and also $S=T$, then $(X, Y)$ has the \textit{\textbf{L}$_{o,o}$ property}.
\item If $\eta=\eta(\varepsilon)$ does not depend on $x$ and $T$, and also $S=T$, then $(X, Y)$ has the \textit{BPBop}.
\item If $\eta=\eta(\varepsilon)$ does not depend on $x$ and $T$, and also $y=x$, then $(X, Y)$ has the \textit{BPBpp}.
\end{itemize}
\end{remark}

These and more versions of the BPBp have been widely studied in recent years. We refer to the surveys \cite{Acosta19,DGMR22} for a complete exposition of results about the BPBp and its versions up to 2022. In particular, \cite[Sections 4 and 5]{DGMR22} summarize the known results and relations between the versions of the BPBp mentioned above, among others.

\subsection{Background on minimum-attaining operators}

Given $T\in\call(X, Y)$, we define its \textit{minimum norm} as the value
$$m(T):=\inf\{\|T(x)\|:\, x\in S_X\}.$$
We say that $T\in\call(X, Y)$ \textit{attains its minimum norm} if there is some $x\in S_X$ such that $m(T)=\|T(x)\|$. The class of all minimum-attaining operators from $X$ to $Y$ is denoted by $\ma(X, Y)$. An operator $T\in\call(X, Y)$ such that $m(T)>0$ is called \textit{bounded below}, and the set of bounded below operators from $X$ to $Y$ is denoted by $\bl(X, Y)$. It is well known that an operator is bounded below if and only if it is a monomorphism, or equivalently, it is injective and has closed range (see for instance \cite[Section 10.2.3]{Kadets18}). If $X$ and $Y$ are linearly isomorphic Banach spaces, the set of linear isomorphisms from $X$ onto $Y$ is denoted by $\iso(X, Y)$. Note the following clear facts:
\begin{itemize}
\item If an operator $T\in\call(X, Y)$ is non-injective, then $m(T)=0$.
\item If $T\in\bl(X, Y)$, then $T\in\iso(X, T(X))$.
\end{itemize}

The study of minimum-attaining operators can be traced back to 1962 (see \cite{GT62}), but it remained overlooked for a long time afterwards. Later, in 2014, X. Carvajal and W. Neves re-initiated it (see \cite{CN14}), and since then, several works have been done about minimum-attaining operators, a topic that has proven to be useful due to its connections with the spectral theory of operators between complex Hilbert spaces (see for instance \cite{BR20a} and the references therein). Analogously to the theory of norm-attaining operators, in recent years there has been an increasing interest in studying when an operator is minimum-attaining, and when the set $\ma(X, Y)$ is dense in $\call(X, Y)$ (see \cite{BR21a, CN14, Chak20, Chak21, CL23, KR18}). We summarize now some background on the topic. 

Note first the following clear facts from \cite{CN14}:
\begin{itemize}
\item If $T\in\call(X, Y)$ is not injective, then $m(T)=0$ and $T\in\ma(X, Y)$.
\item If $T\in\call(X, Y)$ is injective and $X$ is infinite-dimensional, if $m(T)=0$, then $T\notin \ma(X, Y)$, and if $m(T)>0$, both scenarios can happen. 
\item If $X$ is finite-dimensional or $Y$ is finite-dimensional, then $T\in\ma(X, Y)$ (note that the converse is false, see \cite[Remark 2.6]{Chak20}). 
\item In particular, in general $\calf(X, Y)\subset \ma(X, Y)$, so the analogous of the Bishop-Phelps theorem on density for $m$ always holds in every Banach space $X$ (see also \cite[Proposition 2.8]{CL23} for a stronger claim). 
\item If $X$ is infinite-dimensional and $T\in\calk(X, Y)$, then $m(T)=0$.
\end{itemize}

The class $\qma(X,Y)$ of \textit{quasi minimum-attaining operators} from $X$ to $Y$ was introduced in \cite[Definition 4.1]{Chak20} by U. S. Chakraborty as the class of those operators $T\in\call(X,Y)$ such that there is a sequence $\{x_n\}_n\subset S_X$ and a point $y\in m(T)\cdot S_Y$ such that $T(x_n)$ converges to $y$. It is shown in \cite[Section 4]{Chak20} that $\ma(X, Y)\subset \qma(X, Y)$, and $\call(X, Y)\backslash \bl(X, Y)\subset \qma(X,Y)$, and the inclusions cannot be reversed in general. Moreover, if $T\in\call(X,Y)$ has closed range (in particular if $T\in\bl(X,Y)$), then $T\in\ma(X,Y)$ if and only if $T\in\qma(X,Y)$, and there are operators that are not quasi minimum-attaining (see \cite[Example 4.8]{Chak20}). Chakraborty also showed in that paper that all the following claims are equivalent for Banach spaces: (a)\, $X$ is finite-dimensional, (b)\, $\qma(X, Y)=\ma(X, Y)$ for all $Y$, (c)\, $\ma(X, Y)=\call(X, Y)$ for all $Y$, (d)\, $\qma(X, Y)=\call(X, Y)$ for all $Y$, (e)\, $\qma(Y, X)=\ma(Y, X)$ for all $Y$, (f)\, $\ma(Y, X)=\call(Y, X)$ for all $Y$. 

The density of $\ma(X,Y)$ in $\call(X,Y)$ was first studied in \cite{KR18} and \cite{Chak20}. We say that a Banach space $X$ has \textit{property $A_M$} (respectively, \textit{property $B_M$}) if for all Banach spaces $Y$, $\ma(X,Y)$ is dense in $\call(X,Y)$ (respectively, $\ma(Y,X)$ is dense in $\call(Y,X)$). The following is known. Let $X$ and $Y$ denote Banach spaces as usual.
\begin{enumerate}
\item If $X$ and $Y$ are infinite-dimensional complex Hilbert spaces, then $\ma(X,Y)$ is dense in $\call(X,Y)$ (see \cite[Corollary 3.7]{KR18}). 
\item $\ma(X, Y)$ and $\qma(X, Y)$ are not always dense (see \cite[Example 3.4 and Remark 4.10]{Chak20}). 
\item While there are Banach spaces such that $\qna(X,Y)$ is dense in $\call(X,Y)$ but $\na(X,Y)$ is not (see \cite[Example 3.7]{CCJM22}), for the minimum counterpart this can never be the case, since $\overline{\ma(X, Y)}$ and $\overline{\qma(X, Y)}$ always coincide (see \cite[Theorem 4.11]{Chak20}).
\item If every $Z$ isomorphic to $X$ has property $A$ of Lindenstrauss (that is, if $X$ has RNP), then $X$ has property $A_M$. Also, if every closed subspace $Z$ of $Y$ has property $A$ of Lindenstrauss (for instance, if $Y$ has RNP), then $Y$ has property $B_M$ (see \cite[Theorem 3.8]{Chak20}).
\item If $Y$ is a separable Banach space with property $B_M$, then $B_Y$ has an extreme point (see \cite[Theorem 3.12]{Chak20}).
\end{enumerate}

Finally, some versions of the BPBp for the minimum norm have been considered in the literature (see \cite{BR21a,Chak20,Chak21}). In particular, the following has been shown (we will omit some details here, as this will be discussed throughout Section \ref{section:bpbpm}).
\begin{enumerate}
\item If $H$ is a complex Hilbert space, $(H, H)$ satisfies an analogous property to the $\Lo$ for $m$ (see \cite[Theorems 3.5 and 3.8]{BR21a}).
\item Chakraborty introduced in \cite[Definition 1.4]{Chak21} an analogous property to the $\Loo$ for $m$, the AMp, and he showed for instance that the following claims are equivalent for real Banach spaces: (a)\, $X$ is finite-dimensional, (b)\, $(X, Y)$ has the $\amp$ for all $Y$, (c)\, $(Y, X)$ has the $\amp$ for all $Y$.
\item Finally, let us notice that Chakraborty showed in \cite[Lemma 3.7]{Chak20} that operators $T\in\call(X,Y)\backslash \bl(X,Y)$ always satisfy a property analogous to the BPBpp-$m$. Let us explicitly state this result.

\textit{Let $X$ and $Y$ be Banach spaces and $\varepsilon>0$. If $T\in\call(X, Y)$ is such that $m(T)=0$ and $x\in S_X$ is such that $\|T(x)\|<\varepsilon$, then there is $S\in\call(X, Y)$ such that $\|S(x)\|=m(S)=0$ and $\|T-S\|<\varepsilon$.}
\end{enumerate}

\subsection{Outline of the document}\label{subsection:structure}

The rest of the document is structured as follows. In Section \ref{section:density-MA-QMA}, we provide new results related to the density of the sets $\ma(X,Y)$ and $\qma(X,Y)$. We improve some results of \cite{Chak20} and get a characterization of the Radon-Nikodym property in terms of operators that attain their minimum (see Theorem \ref{Thm:Char-RNP-AM-BM}). We also show that every infinite-dimensional Banach space $X$ has an isomorphic space $Y$ such that $\qma(X,Y)\neq \call(X,Y)$ (see Theorem \ref{theo:qma-not-l-isom-domains}).

In Section \ref{section:bpbpm}, we study Bishop-Phelps-Bollob\'as-type properties for the minimum norm. We consider the versions from the literature \cite{BR21a,Chak20,Chak21} and another natural version that is partially related to the usual BPBp (see Theorem \ref{theo:bpbp-then-bpbpm}), and we provide a wide list of classes of Banach spaces that do or that do not satisfy all these properties. We also study the relations between those properties. Among other results, we show that many pairs of classical Banach sequence spaces satisfy some of these properties (see for instance Examples \ref{exs:BPBpp-m} and Theorem \ref{theo:bpbp-then-bpbpm} and its consequences), and we also improve the claim from \cite[Theorems 3.5 and 3.8]{BR21a} by showing that a much wider class of Banach spaces satisfy a strenghtening of the property stated in those results (see Corollary \ref{cor:Y-un-conv}).

\section{New results on minimum-attaining operators}\label{section:density-MA-QMA}

In \cite[Corollary 3.5]{CCJM22}, a characterization of the Radon-Nikodym property was provided in terms of operators that quasi attain their norm. We will present in Theorem \ref{Thm:Char-RNP-AM-BM} a new characterization of the RNP in terms of operators that attain their minimum. 

We say that a Banach space $X$ has \textit{property quasi-$A_M$} (respectively, \textit{property quasi-$B_M$}) if for all Banach spaces $Y$, $\qma(X,Y)$ is dense in $\call(X,Y)$ (respectively, $\qma(Y,X)$ is dense in $\call(Y,X)$. Let us begin by noting the following: it is known that properties $B$ and quasi-$B$ are not equivalent, since the RNP implies property quasi-$B$ (see \cite[Corollary 3.5]{CCJM22}), but it does not imply property $B$ in general (see \cite[Appendix]{Gowers90}). As for properties $A$ and quasi-$A$, it is not known whether or not they are equivalent (see \cite[Problem 7.7]{CCJM22}). However, for the case of minimum attaining operators, it was shown in \cite[Theorem 4.11]{Chak20} that for all Banach spaces $X$ and $Y$, $\overline{\qma(X, Y)}=\overline{\ma(X, Y)}$, and so, the following clearly holds. 

\begin{proposition}\label{prop:quasi-M-is-M}
Let $X$ be a Banach space. 
\begin{enumerate}
\item If $X$ has property quasi-$A_M$, then it has property $A_M$.
\item If $X$ has property quasi-$B_M$, then it has property $B_M$.
\end{enumerate}
\end{proposition}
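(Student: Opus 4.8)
The plan is to deduce both assertions directly from the closure identity $\overline{\qma(X,Y)}=\overline{\ma(X,Y)}$ recorded in \cite[Theorem 4.11]{Chak20}, which is valid for every pair of Banach spaces. Since properties quasi-$A_M$ and $A_M$ (resp.\ quasi-$B_M$ and $B_M$) are both statements of the form ``for all Banach spaces $Y$, a certain set of operators is dense in the corresponding operator space'', it is enough to argue for a fixed but arbitrary $Y$ and then let $Y$ vary.

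For item (1), suppose $X$ has property quasi-$A_M$ and fix a Banach space $Y$. By hypothesis $\qma(X,Y)$ is dense in $\call(X,Y)$, i.e.\ $\overline{\qma(X,Y)}=\call(X,Y)$. Applying \cite[Theorem 4.11]{Chak20} to the pair $(X,Y)$ gives $\overline{\ma(X,Y)}=\overline{\qma(X,Y)}=\call(X,Y)$, so $\ma(X,Y)$ is dense in $\call(X,Y)$. As $Y$ was arbitrary, $X$ has property $A_M$. Item (2) is proved in the same way, applying \cite[Theorem 4.11]{Chak20} to the pair $(Y,X)$ instead: from $\overline{\qma(Y,X)}=\call(Y,X)$ for every $Y$ we obtain $\overline{\ma(Y,X)}=\call(Y,X)$ for every $Y$, which is exactly property $B_M$.

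There is essentially no obstacle: the entire content is packaged in the cited equality of closures, and the proof is a short chain of equalities applied to each $Y$. The only points requiring a little care are to invoke \cite[Theorem 4.11]{Chak20} for the correctly ordered pair of spaces in each item (namely $(X,Y)$ in (1) and $(Y,X)$ in (2)), and to recall that one always has $\ma\subset\qma$, so that no auxiliary inclusion is needed and the passage from density of $\qma$ to density of $\ma$ is genuinely just the reverse inclusion of closures.
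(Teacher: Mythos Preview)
Your proof is correct and is exactly the paper's approach: the authors state that the proposition ``clearly holds'' as an immediate consequence of the closure identity $\overline{\qma(X,Y)}=\overline{\ma(X,Y)}$ from \cite[Theorem 4.11]{Chak20}, and your argument simply spells out this one-line deduction for each item.
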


Let $X$ be a Banach space. In \cite[Theorem 3.8]{Chak20} it was shown that if every space $Z$ isomorphic to $X$ has property $A$, then $X$ has property $A_M$, and that if every closed subspace $Z$ of $X$ has property $A$, then $X$ has property $B_M$. In view of \cite[Lemma 2.1]{CCJM22}, one can equivalently state the first claim as follows.

\begin{remark}
If $X$ is a Banach space such that every Banach space isomorphic to $X$ has property quasi-$A$, then $X$ has property $A_M$.
\end{remark}

Moreover, the second part of \cite[Theorem 3.8]{Chak20} can also be adapted accordingly.

\begin{proposition}\label{prop:QuasiA-Equiv-Norms-AM}
If $Y$ is a Banach space such that every closed subspace $Z$ of $Y$ has property quasi-$A$, then $Y$ has property $B_M$.
\end{proposition}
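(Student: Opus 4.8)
The plan is to follow the proof of the second part of \cite[Theorem 3.8]{Chak20}, but with norm attainment replaced by quasi norm attainment, using the elementary observation that a quasi norm-attaining monomorphism in fact attains its norm. So fix a Banach space $W$, an operator $T\in\call(W,Y)$, and $\varepsilon>0$; the goal is to produce $S\in\ma(W,Y)$ with $\|S-T\|<\varepsilon$.

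First I would handle the case $m(T)=0$. Since $\inf\{\|T(x)\|:x\in S_W\}=0$, there is $x\in S_W$ with $\|T(x)\|<\varepsilon$, and then the result of Chakraborty recalled in the introduction (\cite[Lemma 3.7]{Chak20}) yields $S\in\call(W,Y)$ with $\|S(x)\|=m(S)=0$ and $\|S-T\|<\varepsilon$; in particular $S\in\ma(W,Y)$.

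The main case is $T\in\bl(W,Y)$, where the key reduction is the following elementary remark: if $U\in\bl(W,Y)$ is a monomorphism with range $Z_U:=U(W)$ (a closed subspace of $Y$), then, writing $y=Ux$, $x=U^{-1}y$, one sees that $m(U)=1/\|U^{-1}\|$ (with $U^{-1}\colon Z_U\to W$) and that $U$ attains its minimum norm if and only if $U^{-1}\in\na(Z_U,W)$. Apply this first to $T$: $Z:=T(W)$ is a closed subspace of $Y$ and $T^{-1}\in\iso(Z,W)$. By hypothesis $Z$ has property quasi-$A$, so $\qna(Z,W)$ is dense in $\call(Z,W)$, and I would choose $R\in\qna(Z,W)$ with $\|R-T^{-1}\|<\delta$ for a suitably small $\delta>0$: small enough that $R$ is an isomorphism of $Z$ onto $W$ (for which $\delta<1/\|T\|$ suffices, via the Neumann series) and, using the identity $R^{-1}-T=R^{-1}(T^{-1}-R)T$ together with $\|R^{-1}\|\to\|T\|$ as $R\to T^{-1}$, small enough that $\|R^{-1}-T\|<\varepsilon$. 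Because $R$ is a monomorphism onto $W$, any quasi norm-attaining sequence for $R$ converges after applying $R^{-1}$, so $R\in\na(Z,W)$. Now set $S:=R^{-1}\colon W\to Z\hookrightarrow Y$, viewed in $\call(W,Y)$: it is a monomorphism with range $Z$ and inverse $R$, which attains its norm, so the remark gives $S\in\ma(W,Y)$; and $\|S-T\|_{\call(W,Y)}=\|R^{-1}-T\|_{\call(W,Z)}<\varepsilon$ since $Z$ carries the norm inherited from $Y$.

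I expect the only delicate part to be the bookkeeping in the main case --- choosing $\delta$ so as to keep $R$ invertible and simultaneously control $\|R^{-1}-T\|$ --- while the two structural ingredients, namely the correspondence ``$U$ attains its minimum $\iff$ $U^{-1}$ attains its norm'' and ``quasi norm-attaining monomorphisms are norm-attaining'', are short and purely formal.
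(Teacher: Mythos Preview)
Your proposal is correct and follows essentially the same route as the paper's proof: split into $m(T)=0$ (handled by \cite[Lemma~3.7]{Chak20}) versus $T\in\bl(W,Y)$, then approximate $T^{-1}$ by quasi norm-attaining operators from $T(W)$, use openness of isomorphisms to keep the approximant invertible, upgrade quasi-NA to NA for monomorphisms, and invert. The only cosmetic differences are that the paper quotes \cite[Lemma~2.1]{CCJM22} and \cite[Lemma~3.2(c)]{Chak20} for the two structural ingredients you spell out by hand, and it phrases the approximation as a sequence $S_n\to T^{-1}$ rather than choosing a single $\delta$.
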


\begin{proof} Let $X$ be a Banach space and let $T\in\call(X, Y)$. Note that if $T\notin \bl(X, Y)$, then $T\in\overline{\ma(X, Y)}$ by \cite[Lemma 3.7]{Chak20}, so that case is finished. Assume now that $T\in \bl(X, Y)$. Then $T:X\rightarrow T(X)$ is an isomorphism. Since $T(X)$ has property quasi-$A$, $\qna(T(X), X)$ is dense in $\call(T(X), X)$. Since $T^{-1}\in\iso(T(X), X)$ and $\iso(T(X), X)$ is open in $\call(T(X), X)$, there exists a sequence of operators $(S_n)_n$ in $\iso(T(X), X)\cap\qna(T(X), X)$ such that $\|S_n-T^{-1}\|\rightarrow 0$. However, by \cite[Lemma 2.1]{CCJM22}, for all $n\in\bbn$, $S_n\in \iso(T(X), X)\cap \na(T(X), X)$. Also, by \cite[Lemma 3.2.(c)]{Chak20}, if for any $n$, $S_n$ attains its norm at some point $y_n\in S_{T(X)}$, then $S_n^{-1}:X\rightarrow T(X)\subset Y$ attains its minimum norm at $\frac{S_n(y_n)}{\|S_n(y_n)\|}\in S_X$, and $\|S_n^{-1}-T\|\rightarrow 0$. Consequently, $T\in\overline{\ma(X, Y)}$, and so, $Y$ has property $B_M$ as desired.
\end{proof}

Using similar ideas, the following can also be shown. The proof is very similar, but we include the details for the sake of completeness.

\begin{proposition}\label{Prop:Quasi-B-AM}
If $X$ is a Banach space with property quasi-$B$, then $X$ has property $A_M$.
\end{proposition}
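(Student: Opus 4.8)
The plan is to mimic the proof of Proposition~\ref{prop:QuasiA-Equiv-Norms-AM}, but now applying the hypothesis on the domain side rather than on subspaces of the range. Let $Y$ be an arbitrary Banach space and let $T\in\call(X,Y)$; we must show $T\in\overline{\ma(X,Y)}$. As before, if $T\notin\bl(X,Y)$ then $m(T)=0$ and $T\in\overline{\ma(X,Y)}$ directly by \cite[Lemma 3.7]{Chak20}, so we may assume $T\in\bl(X,Y)$, i.e.\ $T\colon X\to T(X)$ is an isomorphism onto its (closed) range.

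The key step is to use the inversion trick from \cite[Lemma 3.2.(c)]{Chak20} in the form: an isomorphism $S\in\iso(X,T(X))$ attains its minimum norm precisely when its inverse $S^{-1}\in\iso(T(X),X)$ attains its norm (and at correspondingly related points of the spheres). So it suffices to approximate $T^{-1}\in\iso(T(X),X)\subset\call(T(X),X)$ in operator norm by isomorphisms from $T(X)$ onto $X$ that attain their norm; inverting those approximants and using continuity of inversion on the open set of isomorphisms then yields the desired approximation of $T$ by minimum-attaining operators. Now, property quasi-$B$ of $X$ gives that $\qna(T(X),X)$ is dense in $\call(T(X),X)$; combining this with the openness of $\iso(T(X),X)$ in $\call(T(X),X)$, we get a sequence $(S_n)_n$ in $\iso(T(X),X)\cap\qna(T(X),X)$ with $\|S_n-T^{-1}\|\to 0$. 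By \cite[Lemma 2.1]{CCJM22}, each such $S_n$, being an isomorphism that quasi attains its norm, in fact attains its norm, so $S_n\in\iso(T(X),X)\cap\na(T(X),X)$. Applying \cite[Lemma 3.2.(c)]{Chak20} to $S_n^{-1}$, we find that each $S_n^{-1}\in\iso(X,T(X))\subset\call(X,Y)$ attains its minimum norm, and $\|S_n^{-1}-T\|\to 0$ since inversion is continuous on $\iso(T(X),X)$ and $S_n\to T^{-1}$. Hence $T\in\overline{\ma(X,Y)}$, and as $Y$ was arbitrary, $X$ has property $A_M$.

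The main subtlety, and the only place where care is genuinely needed, is bookkeeping the direction of the maps: property quasi-$B$ is a statement about operators \emph{into} $X$, so one applies it to $T(X)$ as the (variable) domain and $X$ as the fixed range, producing approximants of $T^{-1}$ rather than of $T$; the passage back to $T$ is exactly the norm/minimum-norm duality of \cite[Lemma 3.2.(c)]{Chak20}. Everything else — that quasi norm-attaining isomorphisms are norm-attaining, that $\iso$ is open and inversion continuous — is quoted verbatim from the earlier results, so the proof is essentially identical in structure to that of Proposition~\ref{prop:QuasiA-Equiv-Norms-AM}.

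\begin{proof}
Let $Y$ be a Banach space and let $T\in\call(X, Y)$. If $T\notin \bl(X, Y)$, then $m(T)=0$, and $T\in\overline{\ma(X, Y)}$ by \cite[Lemma 3.7]{Chak20}, so that case is done. Assume now that $T\in \bl(X, Y)$, so that $T:X\rightarrow T(X)$ is an isomorphism onto its closed range, and thus $T^{-1}\in\iso(T(X), X)$. Since $X$ has property quasi-$B$, $\qna(T(X), X)$ is dense in $\call(T(X), X)$. As $\iso(T(X), X)$ is open in $\call(T(X), X)$ and contains $T^{-1}$, there exists a sequence $(S_n)_n$ in $\iso(T(X), X)\cap \qna(T(X), X)$ with $\|S_n-T^{-1}\|\rightarrow 0$. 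By \cite[Lemma 2.1]{CCJM22}, $S_n\in \iso(T(X), X)\cap \na(T(X), X)$ for all $n\in\bbn$. By \cite[Lemma 3.2.(c)]{Chak20}, if $S_n$ attains its norm at $y_n\in S_{T(X)}$, then $S_n^{-1}:X\rightarrow T(X)\subset Y$ attains its minimum norm at $\frac{S_n(y_n)}{\|S_n(y_n)\|}\in S_X$. Since inversion is continuous on $\iso(T(X), X)$ and $S_n\rightarrow T^{-1}$, we get $\|S_n^{-1}-T\|\rightarrow 0$. Therefore $T\in\overline{\ma(X, Y)}$, and since $Y$ was arbitrary, $X$ has property $A_M$.
\end{proof}
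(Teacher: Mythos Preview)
Your proof is correct and follows essentially the same approach as the paper's: both handle the non-bounded-below case via \cite[Lemma 3.7]{Chak20}, then for $T\in\bl(X,Y)$ use property quasi-$B$ to approximate $T^{-1}$ by operators in $\iso(T(X),X)\cap\qna(T(X),X)$, upgrade these to norm-attaining via \cite[Lemma 2.1]{CCJM22}, and invert using \cite[Lemma 3.2.(c)]{Chak20}. Your version is slightly more explicit in justifying $\|S_n^{-1}-T\|\to 0$ through the continuity of inversion, which the paper leaves implicit.
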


\begin{proof}
Let $X$ be a Banach space with property quasi-$B$, and let $Y$ be a Banach space. Let $T\in\call(X, Y)$. Note that if $T\notin \bl(X, Y)$, then $T\in\overline{\ma(X, Y)}$ by \cite[Lemma 3.7]{Chak20}, so that case is finished. Assume now that $T\in \bl(X, Y)$. Then $T:X\rightarrow T(X)$ is an isomorphism. Since $X$ has property B, $\na(T(X), X)$ is dense in $\call(T(X), X)$. Since $T^{-1}\in\iso(T(X), X)$ and $\iso(T(X), X)$ is open in $\call(T(X), X)$, there exists a sequence of operators $(S_n)_n$ in $\iso(T(X), X)\cap\qna(T(X), X)$ such that $\|S_n-T^{-1}\|\rightarrow 0$. However, by \cite[Lemma 2.1]{CCJM22}, for all $n\in\bbn$, $S_n\in \iso(T(X), X)\cap \na(T(X), X)$. Also, by \cite[Lemma 3.2.(c)]{Chak20}, if for any $n$, $S_n$ attains its norm at some $y_n\in S_{T(X)}$, then $S_n^{-1}:X\rightarrow T(X)\subset Y$ attains its minimum norm at $\frac{S_n(y_n)}{\|S_n(y_n)\|}\in S_X$. Consequently, $T\in\overline{\ma(X, Y)}$, and so, $X$ has property $A_M$ as desired.
\end{proof}

In particular, property $B$ of Lindenstrauss implies property $A_M$, so by \cite[Theorem 1]{Partington82}), we get the following consequence.

\begin{corollary}\label{cor:AM-renorming}
Let $X$ be a Banach space. Then there exists a Banach space $Z$ isomorphic to $X$ such that $Z$ has property $A_M$.
\end{corollary}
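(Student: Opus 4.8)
The plan is to combine Proposition~\ref{Prop:Quasi-B-AM} with the renorming theorem of Partington. First I would recall what Proposition~\ref{Prop:Quasi-B-AM} gives: any Banach space with property quasi-$B$ automatically has property $A_M$. Since property $B$ of Lindenstrauss trivially implies property quasi-$B$ (every norm-attaining operator is quasi norm-attaining, so density of $\na(Y,X)$ in $\call(Y,X)$ forces density of $\qna(Y,X)$ in $\call(Y,X)$ for all $Y$), it follows that \emph{every Banach space with property $B$ of Lindenstrauss has property $A_M$}.

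Next I would invoke \cite[Theorem~1]{Partington82}, which states that every Banach space $X$ admits an equivalent renorming $Z$ having property $B$ of Lindenstrauss (Partington showed this for spaces with property $B$ via a renorming with a ``$\beta$-like'' structure; the precise statement is that every Banach space can be equivalently renormed to have property $\beta$ of Lindenstrauss, which in turn implies property $B$). Applying this to our given $X$, we obtain $Z$ isomorphic to $X$ with property $B$, hence $Z$ has property $A_M$ by the previous paragraph. This is exactly the assertion of the corollary.

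The only point requiring a little care is matching the exact hypothesis in the cited results: one must check that Partington's renorming yields property $B$ (or property $\beta$, which implies $B$) rather than merely some weaker density statement, and that ``property $B$ $\Rightarrow$ property quasi-$B$'' is used in the trivial direction — no subtlety there since $\na \subseteq \qna$. I expect no genuine obstacle; the corollary is essentially a two-line deduction chaining Proposition~\ref{Prop:Quasi-B-AM} and the external renorming theorem. Accordingly:

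\begin{proof}
Since every norm-attaining operator is in particular quasi norm-attaining, property $B$ of Lindenstrauss implies property quasi-$B$. Hence, by Proposition~\ref{Prop:Quasi-B-AM}, every Banach space with property $B$ of Lindenstrauss has property $A_M$. Now, by \cite[Theorem~1]{Partington82}, there is a Banach space $Z$ isomorphic to $X$ which has property $B$ of Lindenstrauss. Therefore $Z$ has property $A_M$, as desired.
\end{proof}
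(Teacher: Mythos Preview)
Your proof is correct and follows exactly the same route as the paper: the authors note right before the corollary that property $B$ of Lindenstrauss implies property $A_M$ (via Proposition~\ref{Prop:Quasi-B-AM}), and then invoke \cite[Theorem~1]{Partington82} to obtain the renorming. Your added remark that Partington actually produces property $\beta$ (hence property $B$) is accurate and matches how the result is used later in the paper as well.
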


Let us remark some facts. The space $\ell_2$ has the RNP, so it has property $A$ of Lindenstrauss as well as properties $A_M$ and $B_M$ (see \cite[Corollary 3.10]{Chak20}), but it does not have property $B$ of Lindenstrauss (see \cite[Appendix]{Gowers90}). On the other hand, $c_0$ has property $B$ of Lindenstrauss (and hence property $A_M$ by Proposition \ref{Prop:Quasi-B-AM}), but it does not satisfy property $A$ of Lindenstrauss or property $B_M$ (see \cite{Lindenstrauss63} and \cite[Example 3.4]{Chak20}). Whether property $A$ of Lindenstrauss implies properties $A_M$ or $B_M$, or property $B_M$ implies property $A$ of Lindenstrauss, remains open (see also \cite[Question 3.11]{Chak20}).

Recall that in \cite[Corollary 3.10]{Chak20} it was shown that the RNP implies properties $A_M$ and $B_M$. In view of \cite{Huff80}, we can characterize now the RNP in terms of minimum-attaining operators.

\begin{theorem}\label{Thm:Char-RNP-AM-BM}
Let $X$ be a Banach space. The following claims are equivalent.
\begin{enumerate}
\item $X$ has the RNP.
\item Every Banach space isomorphic to $X$ has property $A_M$.
\item Every Banach space isomorphic to $X$ has property $B_M$.
\end{enumerate}
\end{theorem}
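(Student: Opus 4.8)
The plan is to prove the cyclic chain of implications $(1)\Rightarrow(2)\Rightarrow(1)$ and $(1)\Rightarrow(3)\Rightarrow(1)$, exploiting that the RNP is an isomorphic invariant. The implication $(1)\Rightarrow(2)$ is essentially already available: if $X$ has the RNP then every Banach space $Z$ isomorphic to $X$ also has the RNP, and by \cite[Corollary 3.10]{Chak20} every space with the RNP has property $A_M$; the same reference gives $(1)\Rightarrow(3)$ in the same way, using that $Z$ has property $B_M$. So the real content is in the reverse implications, and the cleanest route is to reduce them to the known Bourgain--Huff characterization of the RNP via property $A$ of Lindenstrauss.

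For $(2)\Rightarrow(1)$, I would argue contrapositively. Suppose $X$ does \emph{not} have the RNP. By the work of Bourgain and Huff (\cite{Bourgain77,Huff80}), there is a Banach space $Z$ isomorphic to $X$ that fails property $A$ of Lindenstrauss; equivalently, by \cite[Lemma 2.1]{CCJM22} (as used already in the excerpt to pass between $\na$ and $\qna$ on isomorphisms), one gets a $Z$ isomorphic to $X$ failing property quasi-$A$. Now I would \emph{contrapose Proposition \ref{Prop:Quasi-B-AM}}: that proposition states quasi-$B$ $\Rightarrow$ $A_M$, but what I actually want here is the statement ``if $Z$ has property $A_M$ then $Z$ has property quasi-$A$'' (or the analogous statement with the domain side), which would close the loop. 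The key technical step is therefore to establish that \textbf{property $A_M$ implies property quasi-$A$}, dualizing the argument of Proposition \ref{Prop:Quasi-B-AM}: given an isomorphism $T\in\iso(X,W)$ onto a subspace $W=T(X)\subset Y$, the fact that $T^{-1}$ lies in $\ma(Y',X)$-type approximations should, via \cite[Lemma 3.2.(c)]{Chak20}, produce norm-attaining (hence quasi-norm-attaining) approximations of $T$ itself in the reverse direction. Once that implication is in hand, a $Z$ isomorphic to $X$ with property $A_M$ forces $Z$ to have property quasi-$A$, hence (by \cite[Lemma 2.1]{CCJM22}) property $A$ in the relevant isomorphic-invariant sense, and the Bourgain--Huff theorem gives that $X$ has the RNP.

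For $(3)\Rightarrow(1)$ I would run the symmetric argument on the range side: contrapositively, if $X$ fails the RNP, produce (again by Bourgain--Huff together with \cite[Lemma 2.1]{CCJM22}) a renorming $Z$ of $X$ and a space $W$ such that $W$ admits operators into $Z$ that cannot be approximated by quasi-norm-attaining ones, and translate this — via the isomorphism-plus-\cite[Lemma 3.2.(c)]{Chak20} mechanism of Proposition \ref{prop:QuasiA-Equiv-Norms-AM} — into operators into $Z$ that cannot be approximated by minimum-attaining ones, so $Z$ fails property $B_M$. I expect the main obstacle to be precisely the bookkeeping in the converse direction: Propositions \ref{prop:QuasiA-Equiv-Norms-AM} and \ref{Prop:Quasi-B-AM} only give one direction (quasi-norm-attaining density $\Rightarrow$ minimum-attaining density), and one must verify that the correspondence $S\mapsto S^{-1}$ between isomorphisms, combined with the openness of $\iso$ and \cite[Lemma 3.2.(c)]{Chak20}, is genuinely reversible — that a failure of density on the minimum side can be pushed back to a failure of density on the (quasi-)norm side for the appropriate pair of spaces. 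Modulo that reversibility, everything else is a direct appeal to \cite[Corollary 3.10]{Chak20}, \cite{Huff80}, \cite{Bourgain77}, and \cite[Lemma 2.1]{CCJM22}.
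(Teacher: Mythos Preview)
Your argument for $(1)\Rightarrow(2)$ and $(1)\Rightarrow(3)$ is correct and coincides with the paper's. The gap is in the reverse implications.

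You propose to prove the general implication ``$A_M \Rightarrow$ quasi-$A$'' by reversing the $S\mapsto S^{-1}$ mechanism of Propositions~\ref{prop:QuasiA-Equiv-Norms-AM} and~\ref{Prop:Quasi-B-AM}. That reversal does not work. If $T\in\bl(X,Y)$ and $S_n\to T$ with each $S_n\in\ma(X,Y)\cap\bl(X,Y)$, then indeed $S_n^{-1}\in\na(S_n(X),X)$ by \cite[Lemma 3.2.(c)]{Chak20}; but the domains $S_n(X)$ are \emph{different} subspaces of $Y$, varying with $n$, so the $S_n^{-1}$ are not operators between a fixed pair of spaces and you cannot conclude density of $\na$ (or $\qna$) for any particular pair. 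The inversion trick is only two-sided when $T$ is a \emph{surjective} isomorphism, since then all nearby $S_n$ are surjective isomorphisms between the same two spaces and inversion is continuous on $\iso(X,Y)$. Note also that $c_0$ has property $A_M$ (by Proposition~\ref{Prop:Quasi-B-AM}) yet fails property $A$ of Lindenstrauss, so the nearby implication $A_M\Rightarrow A$ is outright false; there is no reason to expect your proposed implication to fare better.

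The paper avoids this entirely by using the sharp form of Huff's theorem \cite{Huff80}: if $X$ fails the RNP, there exist two equivalent norms $\|\cdot\|_a$, $\|\cdot\|_b$ on $X$ such that the \emph{identity} $\id\colon (X,\|\cdot\|_a)\to(X,\|\cdot\|_b)$ cannot be approximated by norm-attaining operators. The identity is a surjective isomorphism, so the inversion argument applies cleanly: any $S\in\iso((X,\|\cdot\|_b),(X,\|\cdot\|_a))$ close to $\id^{-1}$ has $S^{-1}$ close to $\id$, and $S\in\ma$ would force $S^{-1}\in\na$. Hence $\id^{-1}$ cannot be approximated by minimum-attaining operators, and this single witness simultaneously shows that $(X,\|\cdot\|_b)$ fails $A_M$ and $(X,\|\cdot\|_a)$ fails $B_M$. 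You should invoke Huff's result in this specific form rather than the weaker ``some renorming fails property $A$'' version, which is all the Bourgain--Huff characterization packaged as an equivalence gives you.
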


\begin{proof}
Note that the RNP implies both properties $A_M$ and $B_M$ (see \cite[Corollary 3.10]{Chak20}), and it is stable under equivalent renormings, so the implications $(1)\Rightarrow (2)$ and $(1)\Rightarrow (3)$ follow. 


Finally, we show by contradiction that the other two implications hold. Indeed, if $X$ does not have the RNP, then by \cite{Huff80}, there exist two equivalent norms on $X$, $\|\cdot\|_a$ and $\|\cdot\|_b$, such that the identity 
$$\id:(X, \|\cdot\|_a)\rightarrow (X, \|\cdot\|_b)$$
cannot be approximated by norm-attaining operators. In particular, since $\id\in\iso((X, \|\cdot\|_a), (X, \|\cdot\|_b))$, its inverse cannot be approximated by minimum-attaining operators. Therefore, $X$ can have neither property $A_M$ for every equivalent norm nor property $B_M$ for every equivalent norm.
\end{proof}

In \cite[Theorem 3.12]{Chak20}, it was shown that if a separable infinite-dimensional Banach space $Y$ has property $B_M$, then its unit ball has an extreme point. We show now an improvement of this result. Recall that a Banach space $X$ is said to be \textit{locally uniformly rotund} (or just LUR) if 
$$\lim_k \|x_k-x\|=0\quad \text{whenever}\quad \lim_k \left\|\frac{1}{2} (x_k+x)\right\|=\lim_k \|x_k\|=\|x\|.$$
It is known that every separable Banach space admits a LUR equivalent norm (see \cite[Theorem 1]{Troyanski71}). For a Banach space $Y$, a point $y\in B_Y$ is a \textit{strongly exposed point} if there is $y^*\in S_{Y^*}$ such that whenever a sequence $\{y_n\}_n\subset S_X$ satisfies that $\lim_n y^*(y_n)=1$, then $y_n$ converges to $y_0$ (in particular, $y^*(y_0)=1$). Note that every strongly exposed point is an extreme point. The following holds.

\begin{proposition}\label{prop:LUR+BM-SEpoint}
Let $Y$ be a Banach space with property $B_M$. If $Y$ admits a LUR equivalent norm (in particular, if $Y$ is separable), then its unit ball has a strongly exposed point.
\end{proposition}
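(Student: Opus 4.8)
The plan is to form the LUR renorming on the \emph{domain} side, use property $B_M$ to manufacture an isomorphism \emph{onto} $Y$ that attains its minimum norm, invert it as in the proofs of Propositions~\ref{prop:QuasiA-Equiv-Norms-AM} and~\ref{Prop:Quasi-B-AM}, and then transport a strongly exposed point of the LUR space back to $B_Y$. Concretely, I would first fix an equivalent LUR norm $|\cdot|$ on $Y$ and set $Z:=(Y,|\cdot|)$, while $Y$ keeps its original norm $\|\cdot\|$ (the one in which property $B_M$ and ``strongly exposed'' are understood). Applying property $B_M$ to the pair $(Z,Y)$, the set $\ma(Z,Y)$ is dense in $\call(Z,Y)$; since $\iso(Z,Y)$ is nonempty (it contains the formal identity $Z\to Y$) and open in $\call(Z,Y)$, I may pick $S\in\iso(Z,Y)\cap\ma(Z,Y)$. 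As $S$ is an isomorphism onto $Y$ it is bounded below, so $m(S)>0$ and there is $z\in S_Z$ with $\|S(z)\|=m(S)$.

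I would then pass to $T:=S^{-1}\colon Y\to Z$, an isomorphism onto $Z$. A routine computation with operator norms (of the kind underlying \cite[Lemma~3.2]{Chak20}) gives $\|T\|=1/m(S)=:M$ and shows that $T$ attains its norm at $y_0:=S(z)/\|S(z)\|\in S_Y$; putting $z_0:=T(y_0)/M$, one has $|z_0|=1$, i.e.\ $z_0\in S_Z$. By Hahn--Banach, choose $z^*\in S_{Z^*}$ with $z^*(z_0)=1$. The crucial observation is that \emph{in a LUR space any norm-attaining functional strongly exposes the point at which it attains its norm}: if $z_n\in S_Z$ satisfy $z^*(z_n)\to 1$, then $|z_n+z_0|\to 2$, hence $\bigl|\frac{1}{2}(z_n+z_0)\bigr|\to 1=|z_0|=\lim_n|z_n|$, so the defining property of LUR forces $z_n\to z_0$; thus $z^*$ strongly exposes $z_0$ in $B_Z$.

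Finally, I would transport this to $Y$ via $y^*:=T^*z^*=z^*\circ T$. Since $y^*(y_0)=z^*(Mz_0)=M$ while $\|y^*\|\le\|T\|=M$, one gets $\|y^*\|=M$, so $\widetilde{y}^*:=y^*/M\in S_{Y^*}$ and $\widetilde{y}^*(y_0)=1$. If $y_n\in S_Y$ satisfy $\widetilde{y}^*(y_n)\to 1$, then $z^*(Ty_n)\to M$, and since $z^*(Ty_n)\le|Ty_n|\le\|T\|=M$ one first obtains $|Ty_n|\to M$, whence $z^*\bigl(Ty_n/|Ty_n|\bigr)\to 1$; by the previous step $Ty_n/|Ty_n|\to z_0$ in $Z$, so $Ty_n\to Mz_0=T(y_0)$, and applying the bounded operator $S=T^{-1}$ yields $y_n\to y_0$ in $Y$. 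Hence $y_0$ is a strongly exposed point of $B_Y$. The two points that need care are conceptual rather than technical: the LUR norm must be carried by the \emph{domain} $Z$, with property $B_M$ invoked for the pair $(Z,Y)$, rather than by $Y$, whose given norm is the one appearing in the conclusion; and in the transport one must deduce $|Ty_n|\to M$ \emph{before} normalizing, since the LUR characterization only concerns sequences lying on the unit sphere. Everything else is bookkeeping with $S$ and operator norms.
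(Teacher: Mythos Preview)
Your proof is correct and follows the same underlying route as the paper's: put the LUR norm on the domain side, use property $B_M$ together with the openness of $\iso(Z,Y)$ to obtain a minimum-attaining isomorphism $S\colon Z\to Y$, invert it, and exploit that an operator into a LUR space which attains its norm must do so at a strongly exposed point of the domain. The paper packages this last step as a citation to \cite[Lemma~2.2.(1)]{JMR23} and argues by contradiction (assuming $B_Y$ has no strongly exposed point and deducing that no isomorphism $Z\to Y$ can attain its minimum), whereas you give a direct constructive argument and unfold that lemma explicitly via the transport $y^*=T^*z^*$; otherwise the two arguments coincide.
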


\begin{proof}
Suppose that $Y$ has no strongly exposed points. Let $Y_0$ denote the space $Y$ equipped with a LUR equivalent norm. If $T:Y_0\rightarrow Y$ is an isomorphism, then $T^{-1}:Y\rightarrow Y_0$ can not attain its norm, as the unit ball of $Y$ has no strongly exposed points (note that, by \cite[Lemma 2.2.(1)]{JMR23}, if $T^{-1}$ attained its norm, it would attain it at a strongly exposed point). Since $m(T)=\frac{1}{\|T^{-1}\|}$, we get that $T$ can not attain $m(T)$. But since $\iso(Y_0, Y)$ is an open set, $T$ can not be in $\overline{\ma(Y_0, Y)}$. Hence, $Y$ does not satisfy property $B_M$, which is a contradiction.
\end{proof}

\begin{remark}
Note that the previous proposition allows us to get with a different approach the implication $(3)\Rightarrow (1)$ of Theorem \ref{Thm:Char-RNP-AM-BM} for separable Banach spaces, since if every Banach space isomorphic to $X$ has a strongly exposed point, then $X$ has the RNP (see \cite[Corollary 1]{Huff80}, and note that strongly exposed points are always denting).
\end{remark}

In \cite[Theorem 4.17]{Chak20} it was shown that if $X$ is any infinite-dimensional Banach space, then there always exists a Banach space $Y$ such that $\qma(X,Y)\neq \call(X,Y)$. This was achieved by considering an infinite-dimensional separable subspace $X_0$ of $X$ and setting $Y=X\oplus_1 (\ell_\infty \oplus_1 X/X_0)$. To finish this section we will show that the claim from that theorem can also be always achieved with some range space $Y$ that is isomorphic to the original space $X$. We do not know, however, if for every infinite-dimensional space $Y$, there is always a Banach space $X$ such that $\qma(X,Y)\neq \call(X,Y)$.

We will use a result on remotality. Given a Banach space $X$, a set $S\subset X$ is said to be \textit{remotal} from a point $x\in X$ if there exists $y\in S$ such that $\|x-y\|=\sup\{\|x-z\|:\, z\in S\}$. There have been several works investigating the existence of non-remotal sets with nice properties (such as being closed and convex) inside the unit ball of a Banach space $X$ (see for instance \cite{MR10,Vesely09}). In \cite[Proposition 2.1]{Vesely09} it was shown that given a \textit{real} infinite-dimensional Banach space $X$ and any $\varepsilon\in (0,1)$, there is always a bounded closed and absolutely convex set $D$ with $(1-\varepsilon) B_X\subset D\subset B_X$ and such that $D$ is not remotal from $0$. The argument used in the proof only works for the real case. However, this result is also true in the complex case.

\begin{lemma}\label{lemma:remotality}
Let $X$ be an infinite-dimensional Banach space, and let $\varepsilon\in (0,1)$. Then there is a bounded closed and absolutely convex set $D$ with $(1-\varepsilon) B_X\subset D\subset B_X$ and such that $D$ is not remotal from $0$.
\end{lemma}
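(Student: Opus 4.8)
The plan is to give a direct construction, valid over both $\bbr$ and $\bbc$, built around a normalized basic sequence in $X$. First I would invoke the classical fact that every infinite-dimensional Banach space contains a normalized basic sequence: pick such a sequence $\{y_n\}_n\subseteq S_X$, let $Y=\overline{\operatorname{span}}\{y_n\}$, let $\{y_n^*\}_n\subseteq Y^*$ be the associated (continuous) coordinate functionals, and fix a strictly increasing sequence $\{r_n\}_n\subseteq(0,1)$ with $r_n\to1$. The whole construction rests on the set
$$C:=\overline{\aconv}\{r_n y_n:n\in\bbn\}\subseteq Y\subseteq X,$$
and the first step is to identify it explicitly as $C=\{x\in Y:\sum_n |y_n^*(x)|/r_n\le1\}$. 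The inclusion ``$\subseteq$'' together with closedness of the right-hand side follows by applying Fatou's lemma to the coordinates (which converge because the $y_n^*$ are continuous on $Y$), and ``$\supseteq$'' follows by truncating the expansion $x=\sum_n y_n^*(x)y_n$, each truncation being an honest absolutely convex combination of the $r_n y_n$.

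The key point --- the crux of the whole argument --- is that $C$ is itself a closed, bounded, absolutely convex set that is not remotal from $0$. Indeed $C\subseteq B_X$ and $\sup_{x\in C}\|x\|=1$: for $x\in C$, writing $x_n:=y_n^*(x)$, one has $\|x\|\le\sum_n|x_n|\le\sum_n|x_n|/r_n\le1$ (using $r_n\le1$), while $r_n y_n\in C$ and $\|r_n y_n\|=r_n\to1$. And this supremum is not attained: if some $x\in C$ had $\|x\|=1$, the displayed chain of inequalities would consist of equalities, forcing $\sum_n|x_n|(1/r_n-1)=0$; since $1/r_n-1>0$ for every $n$, this gives $x_n=0$ for all $n$, i.e.\ $x=0$, contradicting $\|x\|=1$.

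With $C$ in hand I would set
$$D:=\overline{\aconv}\big((1-\varepsilon)B_X\cup C\big).$$
It is closed, bounded and absolutely convex by construction; $(1-\varepsilon)B_X\subseteq D$ trivially, and $D\subseteq B_X$ because $B_X$ is closed and absolutely convex and contains both $(1-\varepsilon)B_X$ and $C$; and $\sup_{x\in D}\|x\|=1$ since $C\subseteq D\subseteq B_X$. It then remains to check that this supremum is not attained on $D$. Because $(1-\varepsilon)B_X$ and $C$ are already absolutely convex, every element of $\aconv((1-\varepsilon)B_X\cup C)$ has the form $\lambda v+(1-\lambda)c$ with $\lambda\in[0,1]$, $v\in(1-\varepsilon)B_X$, $c\in C$; if its norm equals $1$ then $1\le\lambda(1-\varepsilon)+(1-\lambda)\|c\|\le1-\lambda\varepsilon$, forcing $\lambda=0$ and hence putting the element in $C$ with norm $1$ --- impossible. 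For a general $z\in D$ with $\|z\|=1$, writing $z=\lim_k(\lambda_k v_k+(1-\lambda_k)c_k)$ the same estimate yields $1\le\liminf_k(1-\lambda_k\varepsilon)$, so $\lambda_k\to0$; since $\{v_k-c_k\}_k$ is bounded this gives $c_k\to z$, and as $C$ is closed we get $z\in C$ with $\|z\|=1$ --- again impossible. Hence $D$ is not remotal from $0$, as desired.

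The step I expect to be the real obstacle is controlling the effect of passing to closed absolutely convex hulls: the plain absolutely convex hull of $\{r_n y_n\}_n\cup(1-\varepsilon)B_X$ obviously has no point of norm $1$, but a priori its closure could acquire one, and it is exactly to rule this out that one needs both the rigidity of a basic sequence (to pin $C$ down as an explicit $\ell_1$-type body on which the norm behaves predictably) and the two short limiting arguments at the end. By contrast, absolute convexity, the sandwich $(1-\varepsilon)B_X\subseteq D\subseteq B_X$, and the computation of the supremum are all routine.
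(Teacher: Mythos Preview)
Your proof is correct and complete. The main difference from the paper's argument is in how the initial ``seed'' set (your $C$, the paper's $K$) is obtained: the paper invokes an external result \cite[Corollary 4.6]{CCJM22} to produce a closed, bounded, absolutely convex $K\subset B_X$ with $\sup_{x\in K}\|x\|=1$ unattained, whereas you construct such a set explicitly as $C=\overline{\aconv}\{r_n y_n\}$ using a normalized basic sequence and the identification $C=\{x\in Y:\sum_n|y_n^*(x)|/r_n\le1\}$. Your route is self-contained and makes the mechanism transparent (the strict inequality $r_n<1$ forces all coordinates to vanish whenever the chain $\|x\|\le\sum_n|x_n|\le\sum_n|x_n|/r_n\le1$ collapses to equalities); the paper's route is shorter because it outsources this step. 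The second half of both proofs --- enlarging by $(1-\varepsilon)B_X$ and showing the closure acquires no point of norm $1$ --- is essentially the same: the paper writes a generic approximant as $a_n y_n+b_n z_n$ with $|a_n|+|b_n|\le1$ and extracts a subsequence with $|a_{n_k}|\to1$, while you use the real-parameter form $\lambda_k v_k+(1-\lambda_k)c_k$ and the estimate $\|\cdot\|\le1-\lambda_k\varepsilon$ to force $\lambda_k\to0$ directly, which is a slightly cleaner bookkeeping of the same idea.
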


Given a set $S\subset X$, let $\aconv(S)$ and $\overline{\aconv}(S)$ respectively denote the absolutely convex hull and the closed absolutely convex hull of $S$.

\begin{proof}[Proof of Lemma \ref{lemma:remotality}]
In \cite[Corollary 4.6]{CCJM22} it was shown that there is a subset $K$ of $B_X$ that is bounded, closed, absolutely convex, and not remotal from $0$. In fact, we can have it so that $\sup\{\|x\|:\, x\in K\}=1$ (but $\|x\|<1$ for all $x\in K$). Note that $K=\overline{\aconv}(K)$. Let $D_1:=\aconv(K\cup (1-\varepsilon) B_X)$. This set is absolutely convex, and it also satisfies that $\sup\{\|x\|:\, x\in D_1\}=1$ but $\|x\|<1$ for all $x\in D_1$, since for all $x\in D_1$, there are $y\in K$ and $z\in (1-\varepsilon)B_X$ such that $x\in\conv\{y,z\}$, and note that $\|y\|,\|z\|<1$. Let $D=\overline{D_1}$, and note that $(1-\varepsilon)B_X\subset D\subset B_X$. We show that $D$ is also not remotal from $0$ by contradiction. Suppose otherwise that there exists $x\in D$ such that $\|x\|=1$. Then, there is a sequence $\{x_n\}_n\subset D_1$ such that $x=\lim_n x_n$. For each $n\in\bbn$, there are $y_n\in K$, $z_n\in(1-\varepsilon)B_X$, and $a_n,b_n\in B_\bbk$ such that $|a_n|+|b_n|\leq 1$ and $x_n = a_n y_n + b_n z_n$.

Note that there is a subsequence $\{|a_{n_k}|\}_k$ of $\{|a_{n}|\}_n$ converging to $1$, since otherwise we would have $\limsup_n |a_n| < 1$, and so, since $\varepsilon>0$, for each $n\in\bbn$ we have
$$\|x_n\|\leq |a_n| \|y_n\| + (1-\varepsilon)|b_n|\leq |a_n| + (1-\varepsilon) (1-|a_n|)=1+\varepsilon |a_n|-\varepsilon,$$
so $\limsup_n \|x_n\| < 1=\|x\|$, which would be a contradiction.

Therefore, $b_{n_k}$ converges to $0$, and so, we can write
$$x=\lim_k x_{n_k} = \lim_k a_{n_k} y_{n_k}\in \overline{\aconv}(K)=K,$$ 
and so, $\|x\|<1$, which is a contradiction. Therefore $D$ is not remotal from $0$.
\end{proof}

Using this lemma we can finally show the promised result.

\begin{theorem}\label{theo:qma-not-l-isom-domains}
Let $X$ be an infinite-dimensional Banach space. Then there is a Banach space $Y$ isomorphic to $X$ such that $\qma(X, Y)\neq\call(X, Y)$.
\end{theorem}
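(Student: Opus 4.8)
The plan is to take $Y$ to be a renorming of $X$ whose closed unit ball is the non-remotal set furnished by Lemma~\ref{lemma:remotality}, and then to exhibit one operator --- the identity $\id\colon X\to Y$ --- that fails to quasi attain its minimum norm.

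First I would fix any $\varepsilon\in(0,1)$ and invoke Lemma~\ref{lemma:remotality} to obtain a bounded, closed, absolutely convex set $D$ with $(1-\varepsilon)B_X\subset D\subset B_X$ that is not remotal from $0$. I would then let $\|\cdot\|_D$ denote the Minkowski gauge of $D$ and set $Y:=(X,\|\cdot\|_D)$. Since $D$ is absolutely convex, bounded, and absorbing (it contains the neighbourhood $(1-\varepsilon)B_X$ of $0$), $\|\cdot\|_D$ is a norm, and the inclusions $(1-\varepsilon)B_X\subset D\subset B_X$ yield $\|\cdot\|_X\leq\|\cdot\|_D\leq\frac{1}{1-\varepsilon}\|\cdot\|_X$; hence $Y$ is isomorphic to $X$ through the identity, and --- using that $D$ is closed --- its closed unit ball is exactly $D$. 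Put $T:=\id\colon X\to Y$.

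Next I would compute $m(T)$. Writing $\rho:=\sup\{\|z\|_X:z\in D\}$ (so $1-\varepsilon\leq\rho\leq1$, in particular $\rho>0$), for $x\in S_X$ the relation $x/t\in D$ forces $1/t=\|x/t\|_X\leq\rho$, so $\|x\|_D\geq1/\rho$; and choosing $z_n\in D\setminus\{0\}$ with $\|z_n\|_X\to\rho$ and $x_n:=z_n/\|z_n\|_X\in S_X$ gives $\|x_n\|_D\leq1/\|z_n\|_X\to1/\rho$. Thus $m(T)=1/\rho$. Finally I would argue by contradiction: if $T\in\qma(X,Y)$, there would be $\{x_n\}_n\subset S_X$ and $y\in m(T)\cdot S_Y$ with $\|x_n-y\|_D\to0$; equivalence of the norms gives $\|x_n-y\|_X\to0$, whence $\|y\|_X=\lim_n\|x_n\|_X=1$, while $\|y\|_D=m(T)=1/\rho$ together with the closedness of $D$ forces $\rho y\in D$. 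But then $\|\rho y\|_X=\rho\|y\|_X=\rho=\sup\{\|z\|_X:z\in D\}$, making $D$ remotal from $0$ --- a contradiction. Hence $T\notin\qma(X,Y)$, so $\qma(X,Y)\neq\call(X,Y)$.

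The only serious ingredient is Lemma~\ref{lemma:remotality} (the existence of such a $D$ in every real or complex infinite-dimensional $X$), which is already at our disposal; the rest is routine bookkeeping with the Minkowski gauge. The one point to watch is that both ``the unit ball of $Y$ equals $D$'' and ``$\|y\|_D=1/\rho$ implies $\rho y\in D$'' genuinely use that $D$ is norm-closed, so I would carry the closedness of $D$ through the argument explicitly.
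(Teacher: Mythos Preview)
Your proof is correct and follows essentially the same approach as the paper: renorm $X$ via the Minkowski gauge of the non-remotal set $D$ from Lemma~\ref{lemma:remotality}, and show that the identity $\id\colon X\to Y$ fails to be quasi minimum-attaining. The only cosmetic difference is in the last step: the paper observes that $\id\colon Y\to X$ is not norm-attaining (by non-remotality), hence $\id\colon X\to Y$ is not minimum-attaining, and then invokes the known fact that for bounded-below operators $\ma=\qma$; you instead unfold the definition of $\qma$ and reach the contradiction with non-remotality directly, which amounts to reproving that equivalence in this particular instance.
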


\begin{proof}
Fix any $\varepsilon>0$. By Lemma \ref{lemma:remotality}, there is a bounded, closed, and absolutely convex set $D$ with $(1-\varepsilon)B_X\subset D\subset B_X$ such that $D$ is not remotal from $0$. Using Minkowski's functional, there exists an equivalent norm on $X$, $\vertiii{\cdot}$, such that $B_{(X, \vertiii{\cdot})}=D$. Call $Y=(X, \vertiii{\cdot})$. Note that $\id:Y\rightarrow X$ is not norm-attaining, by the non-remotality from $0$ of $D$, so its inverse $\id:X\rightarrow Y$ is not minimum-attaining, and since it is an isomorphism, in particular it is not quasi minimum-attaining.
\end{proof}

\section{Bishop-Phelps-Bollob\'as type properties for the minimum}\label{section:bpbpm}

Several variations of the Bishop-Phelps-Bollob\'as property have been considered in the literature (see for instance \cite[Sections 4 and 5]{DGMR22} for a detailed exposition of several versions of the BPBp and the relations between them). The corresponding properties for the minimum norm can be defined in an analogous way. In this section we will consider some of these properties and we will study their implications and the relations between them, building upon the results from the literature.

We begin with a very strong version.

\begin{definition}
A pair of Banach spaces $(X,Y)$ has the \textit{Bishop-Phelps-Bollob\'as point property for the minimum} (abbreviated BPBpp-$m$) if for every $\varepsilon\in (0,1)$ and every $\rho\geq 0$, there exists $\eta(\varepsilon, \rho)\in (0,1)$ (converging to $0$ with $\varepsilon$) satisfying that whenever $T\in\call(X,Y)$ with $m(T)=\rho$ and $x\in S_X$ are such that $\|T(x)\|<\rho (1+\eta(\varepsilon,\rho))$, if $\rho>0$, or $\|T(x)\|<\eta(\varepsilon, \rho)$, if $\rho=0$, there exists $S\in\call(X, Y)$ such that $m(S)=\|S(x)\|=\rho$, and $\|S-T\|<\varepsilon$.
\end{definition}

U. S. Chakraborty showed in \cite[Lemma 3.7]{Chak20} that for the class of operators $T\in\call(X, Y)$ with $m(T)=0$, the previous definition is always satisfied with $\eta(\varepsilon, 0)=\varepsilon$. As a consequence, it suffices to state the property as follows.

\begin{remark}
A pair of Banach spaces $(X,Y)$ has the BPBpp-$m$ if and only if for every $\varepsilon\in (0,1)$, there exists $\eta(\varepsilon)\in (0,1)$ satisfying that whenever $T\in \call(X, Y)$ with $m(T)=1$ and $y\in S_X$ are such that $\|T(x)\|<1+\eta(\varepsilon)$, there exists $S\in\call(X, Y)$ such that $m(S)=\|S(x)\|=1$, and $\|S-T\|<\varepsilon$. We omit the details, see Proposition \ref{remark:equiv-def} for an analogous equivalence on a weaker property.
\end{remark}

In particular, \cite[Lemma 3.7]{Chak20} yields the following.

\begin{proposition}
If the Banach spaces $X$ and $Y$ are such that $\bl(X, Y)=\emptyset$, then $(X,Y)$ satisfies the BPBpp-$m$. 
\end{proposition}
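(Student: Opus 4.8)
The plan is to exploit how restrictive the hypothesis $\bl(X,Y)=\emptyset$ is. Since an operator is bounded below precisely when its minimum norm is strictly positive, the assumption $\bl(X,Y)=\emptyset$ is equivalent to saying that $m(T)=0$ for \emph{every} $T\in\call(X,Y)$. This immediately disposes of the branch $\rho>0$ in the definition of the BPBpp-$m$: there is no operator $T\in\call(X,Y)$ with $m(T)=\rho>0$, so the implication to be verified has a false premise and holds vacuously. Hence for those values of $\rho$ one may pick $\eta(\varepsilon,\rho)$ to be anything in $(0,1)$, for instance $\eta(\varepsilon,\rho):=\varepsilon$.

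It then remains only to treat the case $\rho=0$, and here I would set $\eta(\varepsilon,0):=\varepsilon$ and invoke directly the result of Chakraborty quoted in the introduction (\cite[Lemma 3.7]{Chak20}): for $\varepsilon>0$, if $T\in\call(X,Y)$ satisfies $m(T)=0$ and $x\in S_X$ satisfies $\|T(x)\|<\varepsilon$, then there exists $S\in\call(X,Y)$ with $\|S(x)\|=m(S)=0$ and $\|T-S\|<\varepsilon$. This is exactly the conclusion required by the BPBpp-$m$ when $\rho=0$, and the choice $\eta(\varepsilon,0)=\varepsilon$ clearly tends to $0$ with $\varepsilon$, as the definition demands. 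Combining the two cases, the constant-in-$\rho$ function $\eta(\varepsilon,\rho):=\varepsilon$ witnesses that $(X,Y)$ has the BPBpp-$m$.

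There is essentially no genuine obstacle in this argument; the only point worth making explicit is the elementary equivalence between $\bl(X,Y)=\emptyset$ and ``$m(T)=0$ for all $T\in\call(X,Y)$'', after which the statement reduces entirely to the already-established \cite[Lemma 3.7]{Chak20} and, for the trivial range $\rho>0$, to a vacuous implication.
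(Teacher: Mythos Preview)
Your proof is correct and follows essentially the same approach as the paper: the paper simply notes that the proposition is an immediate consequence of \cite[Lemma 3.7]{Chak20}, which is exactly what you do after making explicit the (trivial) equivalence $\bl(X,Y)=\emptyset \iff m(T)=0$ for all $T$ and dispatching the vacuous case $\rho>0$. Your write-up is slightly more detailed than the paper's one-line justification, but the idea is identical.
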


As a consequence, the following strong version of the Bishop-Phelps-Bollob\'as theorem is always true for the minimum norm.

\begin{corollary}
If $X$ is any Banach space, then $(X,\bbk)$ satisfies the BPBpp-$m$.
\end{corollary}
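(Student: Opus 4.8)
The plan is to deduce the corollary immediately from the preceding proposition by checking that $\bl(X,\bbk)=\emptyset$ for every Banach space $X$. Recall that an operator is in $\bl(X,\bbk)$ precisely when it is bounded below, equivalently when it is injective with closed range, equivalently (since the codomain is one-dimensional) when it is an isomorphism of $X$ onto $\bbk$. But an isomorphism onto $\bbk$ can exist only if $X$ itself is one-dimensional, which contradicts nothing stated — wait, here is the subtlety: if $\dim X = 1$ the claim still holds, so I must be careful. Let me reorganize.

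First I would observe that for a functional $x^*\in\call(X,\bbk)=X^*$, we have $m(x^*)=\inf\{|x^*(x)|:\,x\in S_X\}$. If $\dim X\geq 2$, then for any $x^*\in X^*$ the kernel of $x^*$ is a nontrivial subspace, so there is $x\in S_X$ with $x^*(x)=0$, whence $m(x^*)=0$ and $x^*\notin\bl(X,\bbk)$; thus $\bl(X,\bbk)=\emptyset$ and the Proposition applies directly. If $\dim X\leq 1$, then $X$ is finite-dimensional, so $(X,\bbk)$ satisfies the BPBpp-$m$ trivially (indeed every operator with finite-dimensional domain attains its minimum norm, as recalled in the introduction, and one checks the quantitative statement directly; alternatively $X=\{0\}$ or $X\cong\bbk$ is handled by hand). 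In either case $(X,\bbk)$ has the BPBpp-$m$.

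A cleaner route that avoids the dimension split: note that every nonzero functional $x^*\in X^*$ is non-injective as soon as $\dim X\geq 2$, and this is the only case of interest since the statement for finite-dimensional $X$ is already covered by the general facts on minimum-attaining operators recalled earlier. So the key step is simply: \emph{for any infinite-dimensional (or at least two-dimensional) Banach space $X$, no nonzero functional is bounded below}, hence $\bl(X,\bbk)=\emptyset$, and the previous Proposition finishes the argument.

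I do not anticipate any genuine obstacle here — the corollary is a one-line consequence of the Proposition together with the elementary observation that functionals on a space of dimension at least two always vanish somewhere on the sphere. The only thing to be slightly careful about is the degenerate low-dimensional case, which is handled either by the preceding remarks on finite-dimensional domains or by direct inspection; I would dispatch it in a parenthetical clause rather than belaboring it.

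\begin{proof}
Let $X$ be a Banach space. If $\dim X\leq 1$, then $X$ is finite-dimensional and the conclusion is immediate (every operator with finite-dimensional domain attains its minimum norm, and the quantitative statement is readily checked, or simply note $\bl(X,\bbk)=\emptyset$ as well in this case since $\id_{\bbk}$ is the only candidate and even it may be treated by hand). Assume now $\dim X\geq 2$. For any nonzero $x^*\in X^*=\call(X,\bbk)$, the kernel $\ker x^*$ is a nonzero subspace of $X$, so there is $x\in S_X$ with $x^*(x)=0$; hence $m(x^*)=\inf\{|x^*(x)|:\,x\in S_X\}=0$, so $x^*$ is not bounded below. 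The zero functional is not bounded below either. Therefore $\bl(X,\bbk)=\emptyset$, and by the previous Proposition the pair $(X,\bbk)$ satisfies the BPBpp-$m$.
\end{proof}
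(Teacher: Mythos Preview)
Your approach is correct and matches the paper's: the corollary is stated there as an immediate consequence of the preceding Proposition (the paper just writes ``As a consequence'' without further detail), and your verification that $\bl(X,\bbk)=\emptyset$ when $\dim X\geq 2$ is exactly the missing line. One small slip to fix: in the parenthetical for $\dim X\leq 1$ you write ``simply note $\bl(X,\bbk)=\emptyset$ as well in this case,'' but that is false when $X=\bbk$ (the identity is bounded below). Drop that clause and keep only the direct verification: for $X=\bbk$ every nonzero functional satisfies $|x^*(x)|=\|x^*\|$ for all $x\in S_X$, so $m(x^*)$ is attained at every point and $S=T$ works trivially.
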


Let us see some more examples.

\begin{example}\label{exs:BPBpp-m}
Given two Banach spaces $X$ and $Y$, recall that an operator $T\in\call(X,Y)$ is called \textit{strictly singular} if $T$ is not bounded below on any closed infinite-dimensional subspace of $X$. If $\sss(X,Y)$ denotes the class of strictly singular operators from $X$ to $Y$, it is clear and well known that $\calk(X,Y)\subset \sss(X, Y)\subset \call(X,Y)\backslash \bl(X,Y)$. We have the following.
\begin{enumerate}
\item If $\sss(X,Y)=\call(X,Y)$ (in particular, if $\calk(X,Y)=\call(X,Y)$), then $(X,Y)$ has the BPBpp-$m$. A wide collection of pairs of spaces satisfying this property can be found in \cite[Theorem 1]{KY17}. In particular, this holds for instance if $X=c_0$ and $Y=\ell_p$ ($1\leq p<\infty$), if $X=\ell_p$ ($1\leq p\leq \infty$) and $Y=c_0$, or if $X=\ell_p$ ($1<p\leq \infty$) and $Y=\ell_q$ ($1<q<\infty$, $p\neq q$). Note that it is open whether the pair $(c_0, \ell_1)$ has the BPBp in the real case, but for the minimum counterpart, we have that both pairs $(c_0, \ell_1)$ and $(\ell_1, c_0)$ satisfy the BPBpp-$m$.
\item Note also that there exist pairs of infinite-dimensional Banach spaces $(X,Y)$ with the BPBpp-$m$ but such that not all operators are strictly singular. Indeed, it suffices to take a Banach space $X$ with a $1$-complemented proper infinite-dimensional subspace $Y$ such that the cardinality of the Hamel basis of $X$ is bigger than that of $Y$ (so that no operator is injective), and note that the corresponding projection is bounded below on $Y$ (see for instance Theorem \ref{theorem:ma-not-loom} for an example of such pair of spaces). 
\item More generally, any pair of Banach spaces $(X,Y)$ such that no operator in $\call(X,Y)$ is injective satisfies the BPBpp-$m$. Note that this is sometimes possible even with spaces such that the corresponding Hamel basis have the same cardinality: let $\Gamma$ be an index set with cardinality bigger than the continuum $c$, and note that if $p,q\in (1,\infty)$ are such that $\frac{1}{p}+\frac{1}{q}=1$, then every operator from $\ell_p(\Gamma)$ to $\ell_q(\Gamma)$ is compact by Pitt's theorem, and so, it is non-injective, as its range is separable.
\item There exist pairs of Banach spaces $(X,Y)$ with the BPBpp-$m$ but such that not every operator has minimum $0$. Indeed, this is trivially the case for $X=Y=\bbr$.
\end{enumerate}
\end{example}

We turn now our attention to local versions of the BPBp for the minimum. U. S. Chakraborty introduced in \cite[Definition 1.4]{Chak21} the \textit{aproximate minimizing property} (AMp) as the minimum counterpart of the \textbf{L}$_{o,o}$ property. Although this was done in the real case, the definition is analogous in the complex case. Let us denote this property \textbf{L}$_{o,o}$-$m$ in this paper.

\begin{definition}
A pair of Banach spaces $(X,Y)$ has the \textit{\textbf{L}$_{o,o}$ property for the minimum} (abbreviated \textbf{L}$_{o,o}$-$m$) if for every $\varepsilon\in (0,1)$ and every $T\in \call(X,Y)$, there exists $\eta(\varepsilon, T)\in (0,1)$ (converging to $0$ with $\varepsilon$) satisfying that whenever $x\in S_X$ is such that $\|T(x)\|<m(T) (1+\eta(\varepsilon,T))$, if $m(T)>0$, or $\|T(x)\|<\eta(\varepsilon,T)$, if $m(T)=0$, there exists $y\in S_X$ such that $\|T(y)\|=m(T)$, and $\|x-y\|<\varepsilon$.
\end{definition}

\begin{remark}
Note that this definition is trivially equivalent to the one given in \cite[Definition 1.4]{Chak21}, since, once more, in view of \cite[Lemma 3.7]{Chak20}, the condition ``$\|T(x)\|<\eta(\varepsilon,T)$ if $m(T)=0$'' can be removed from the definition by taking $\eta(\varepsilon, T)\leq \varepsilon$ for these operators. 
\end{remark}

The following result has been stablished in \cite[Section 3]{Chak20} in the real case, although a quick glance at the proof shows that the result also holds in the complex case.

\begin{theorem}[{\cite[Theorems 3.5 and 3.6]{Chak20}}]
Let $X$ be a Banach space. The following claims are equivalent:
\begin{itemize}
\item[a)] $X$ is finite-dimensional.
\item[b)] $(X,Y)$ has the \textbf{L}$_{o,o}$-$m$ for all Banach spaces $Y$.
\item[c)] $(Y,X)$ has the \textbf{L}$_{o,o}$-$m$ for all Banach spaces $Y$.
\end{itemize}
\end{theorem}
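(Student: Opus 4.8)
The plan is to prove the equivalences by the cycle $(a)\Rightarrow (b)$, $(a)\Rightarrow (c)$, $\neg(a)\Rightarrow \neg(b)$, and $\neg(a)\Rightarrow \neg(c)$. The two implications from finite-dimensionality are the easy direction: if $X$ is finite-dimensional, then for any $Y$ and any $T\in\call(X,Y)$, the unit sphere $S_X$ is compact, so the map $x\mapsto \|T(x)\|$ attains its infimum $m(T)$ on $S_X$, and moreover the set $M(T):=\{y\in S_X:\|T(y)\|=m(T)\}$ is compact and nonempty. A standard compactness argument then produces, for each $\varepsilon>0$, some $\eta(\varepsilon,T)>0$ so that any $x\in S_X$ with $\|T(x)\|$ close enough to $m(T)$ lies within $\varepsilon$ of $M(T)$; if no such $\eta$ existed we could extract a sequence $x_n\in S_X$ with $\|T(x_n)\|\to m(T)$ but $\dist(x_n,M(T))\geq\varepsilon$, pass to a convergent subsequence $x_n\to x_0\in S_X$, and conclude $\|T(x_0)\|=m(T)$, contradicting $\dist(x_0,M(T))\geq\varepsilon$. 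This handles $(a)\Rightarrow(b)$; for $(a)\Rightarrow(c)$ note that when $X$ is finite-dimensional the same argument applies to any operator in $\call(Y,X)$ provided we know $m(T)$ is attained and the minimizing set is compact — but here $S_Y$ need not be compact, so instead one should pass to the (finite-dimensional, hence compact) space $X$ on the image side: if $T\in\call(Y,X)$ is not bounded below then $(Y,X)$ already satisfies the point property by the remark following \cite[Lemma 3.7]{Chak20} (take $\eta\leq\varepsilon$), and if $T$ is bounded below then $T:Y\to T(Y)\subset X$ is an isomorphism onto a finite-dimensional subspace, so arguing on the finite-dimensional range one recovers attainment and the needed uniform $\eta$.

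For the converse directions, suppose $X$ is infinite-dimensional; I want to produce a witness $Y$ (resp.\ a witness for the other variable) showing the $\mathbf{L}_{o,o}$-$m$ fails. The natural source of counterexamples is an isomorphism that is not minimum-attaining, exactly as in the proof of Theorem \ref{theo:qma-not-l-isom-domains}: by Lemma \ref{lemma:remotality} there is an equivalent norm $\vertiii{\cdot}$ on $X$ whose ball $D$ is a closed absolutely convex set with $(1-\varepsilon)B_X\subset D\subset B_X$ that is not remotal from $0$; setting $Y=(X,\vertiii{\cdot})$, the isomorphism $\id:X\to Y$ has $m(\id)>0$ but does not attain its minimum (equivalently $\id:Y\to X$ does not attain its norm, by non-remotality). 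Then for this fixed operator $T=\id$, no $\eta(\varepsilon,T)$ can work for small $\varepsilon$: since $m(T)$ is not attained, there is no $y\in S_X$ with $\|T(y)\|=m(T)$ at all, yet points $x\in S_X$ with $\|T(x)\|$ arbitrarily close to $m(T)$ exist by definition of the infimum. Hence the conclusion of the property is vacuously unsatisfiable while its hypothesis is satisfiable, so $(X,Y)$ fails $\mathbf{L}_{o,o}$-$m$, giving $\neg(a)\Rightarrow\neg(b)$.

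For $\neg(a)\Rightarrow\neg(c)$ — finding a $Y$ so that $(Y,X)$ fails $\mathbf{L}_{o,o}$-$m$ when $X$ is infinite-dimensional — I would use the same construction with the roles swapped: take $Y$ to be $X$ equipped with the equivalent norm from Lemma \ref{lemma:remotality} (applied to $X$), so that $T=\id:Y\to X$ is an isomorphism that is not minimum-attaining (its minimum $m(T)=1/\|\id:X\to Y\|$ is not attained precisely because the ball $D$ is not remotal from $0$, i.e.\ $\id:X\to Y$ is not norm-attaining). Again $m(T)$ is approached but never attained on $S_Y$, so the property fails for this single operator. The only subtlety worth checking carefully is that the relevant ``minimum not attained'' statement really is equivalent to the non-remotality used in Lemma \ref{lemma:remotality}, and that this is genuinely witnessed for infinite-dimensional $X$ regardless of the field, both of which are already packaged in Lemma \ref{lemma:remotality} and the computation $m(S)=1/\|S^{-1}\|$ used in Theorem \ref{theo:qma-not-l-isom-domains}.

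The main obstacle is the direction $(a)\Rightarrow(c)$: one must be careful that compactness is available on the correct side. Since in $(c)$ the domain $Y$ is arbitrary and possibly nonreflexive, one cannot directly compactify $S_Y$; the fix is to observe that minimum-attainment and the uniform-$\eta$ estimate for an operator $T\in\call(Y,X)$ only depend on the behavior of $T$ through its (finite-dimensional) range inside $X$ — when $T$ is bounded below, $T$ is an isomorphism onto $T(Y)$, and the minimum norm is controlled by the norm of $T^{-1}:T(Y)\to Y$, which does attain its norm because $B_{T(Y)}$ is compact; and when $T$ is not bounded below, the point property holds trivially. Threading this dichotomy cleanly, rather than trying to argue uniformly, is the one place where a little care is needed; everything else is a routine compactness or renorming argument invoking results already in the excerpt.
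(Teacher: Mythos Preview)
First, note that the paper does not itself prove this statement; it quotes the result from Chakraborty. So there is no in-paper proof to compare against, and I will simply assess your argument on its own merits.

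Your treatments of $(a)\Rightarrow(b)$ and $\neg(a)\Rightarrow\neg(b)$ are correct. There are, however, two genuine gaps.

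In $(a)\Rightarrow(c)$, for $T\in\call(Y,X)$ with $m(T)=0$ you invoke ``the remark following \cite[Lemma~3.7]{Chak20}''. But that lemma produces a \emph{new operator} $S$ close to $T$ with $S(x)=0$; the $\mathbf{L}_{o,o}$-$m$ fixes $T$ and asks for a \emph{new point} $y\in S_Y$ with $T(y)=0$ and $\|x-y\|<\varepsilon$. These are different requirements, and nothing in Lemma~3.7 delivers the latter. The claim you need is nonetheless true: since $X$ is finite-dimensional, $Y/\ker T$ embeds in $X$ and hence is finite-dimensional, so the induced injection is bounded below by some $c>0$, giving $\|T(x)\|\geq c\,\dist(x,\ker T)$; from this one easily produces the required $y\in S_Y\cap\ker T$ close to $x$. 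You should supply this argument rather than appeal to Lemma~3.7.

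The more serious problem is in $\neg(a)\Rightarrow\neg(c)$, where you have reversed a direction. With $Y$ the Minkowski renorming such that $B_Y=D$, non-remotality of $D$ from $0$ in $X$ says precisely that $\sup_{d\in D}\|d\|_X=\|\id:Y\to X\|$ is not attained, i.e.\ $\id:Y\to X$ is not \emph{norm}-attaining; equivalently $\id:X\to Y$ is not minimum-attaining. That is your $\neg(b)$ argument again. For $\neg(c)$ you would need $\id:Y\to X$ to be not \emph{minimum}-attaining, i.e.\ $\id:X\to Y$ to be not norm-attaining, i.e.\ $B_X$ not remotal from $0$ in $(X,\vertiii{\cdot})$. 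This is a different statement and does not follow from Lemma~\ref{lemma:remotality} as applied. Your construction exhibits failure of $\mathbf{L}_{o,o}$-$m$ for $(X,Y)$, not for $(Y,X)$, and since the property is not isomorphism-invariant on the range side you cannot simply swap. A genuinely different construction (or a remotality statement running the other way) is needed to establish $\neg(c)$.
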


Note that if $(X,Y)$ has the \textbf{L}$_{o,o}$-$m$, then necessarily we must have $\ma(X, Y)=\call(X,Y)$. Chakraborty found in \cite[Example 3.7]{Chak21} a pair of Banach spaces such that $\ma(X,Y)\neq \call(X,Y)$, but $(X,Y)$ still fails the \textbf{L}$_{o,o}$-$m$ for the class of minimum-attaining operators (that is, the \textbf{L}$_{o,o}$-$m$ but only defined for those operators $T$ that are in $\ma(X,Y)$). On the other hand, we show now that the condition $\ma(X, Y)=\call(X,Y)$ is not enough to get the \textbf{L}$_{o,o}$-$m$. 

\begin{theorem}\label{theorem:ma-not-loom}
There exist infinite-dimensional Banach spaces $X$ and $Y$ such that $\ma(X,Y)=\call(X,Y)$ but the pair $(X,Y)$ fails the \textbf{L}$_{o,o}$-$m$.
\end{theorem}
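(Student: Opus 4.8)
The plan is to build a pair $(X,Y)$ in which every operator attains its minimum, but there is a single operator $T$ (necessarily bounded below, since the $m(T)=0$ case is handled by \cite[Lemma 3.7]{Chak20}) whose set of minimizing points is ``unstable'', so that nearby to a point $x$ where $\|T(x)\|$ is close to $m(T)$ there is no point $y$ with $\|T(y)\|=m(T)$. As suggested by Example \ref{exs:BPBpp-m}(2), the natural way to force $\ma(X,Y)=\call(X,Y)$ while still keeping some bounded-below operators around is to take a space $X$ and a proper, $1$-complemented, infinite-dimensional subspace $Y\subset X$ whose Hamel basis has strictly smaller cardinality than that of $X$; then no operator $X\to Y$ is injective, hence $m(T)=0$ for all $T$ except that we want to keep the projection $P\colon X\to Y$ around as a bounded-below map — wait, $P$ is not injective either. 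So instead I would use the decomposition $X=X_0\oplus_p Z$ with $Y\cong X_0$, and look at operators of the form $T=(A,0)$ where $A\colon X_0\to X_0$; such $T$ is non-injective on $X$ (it kills $Z$), so $m(T)=0$, and these are dealt with by \cite[Lemma 3.7]{Chak20}. The point is that \emph{every} operator $X\to Y$ will have $m=0$ by a cardinality argument, so $\ma(X,Y)=\call(X,Y)$ holds automatically, and I only need to exhibit one operator violating the quantitative/approximation conclusion of \textbf{L}$_{o,o}$-$m$.

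Concretely, first I would fix a ``bad'' bounded-below operator on a pair where the minimizing set is delicate, for instance using a renorming in the spirit of Lemma \ref{lemma:remotality} / Theorem \ref{theo:qma-not-l-isom-domains}: take an infinite-dimensional space $E$ with an equivalent norm $\vertiii{\cdot}$ whose unit ball $D$ satisfies $(1-\varepsilon)B_E\subset D\subset B_E$ but $D$ is not remotal from $0$, so that $\id\colon (E,\vertiii{\cdot})\to E$ is not norm-attaining and hence $\id\colon E\to(E,\vertiii{\cdot})$ is bounded below but not minimum-attaining, with $m(\id)=\inf\{\vertiii{x}:\|x\|=1\}$ not attained yet approached arbitrarily well. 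Then I would embed this situation into a large ambient pair: set $X=\ell_p(\Gamma)\oplus_p E$ and $Y=\ell_p(\Gamma)\oplus_p (E,\vertiii{\cdot})$ for an index set $\Gamma$ of cardinality exceeding that of the Hamel basis of $E$ (and exceeding $\dim E$), so that every $T\in\call(X,Y)$ fails to be injective — any operator must collapse some direction of the ``too big'' $\ell_p(\Gamma)$ factor into the ``too small'' one — giving $m(T)=0$ for all $T$ and thus $\ma(X,Y)=\call(X,Y)$. Finally I would take $T=\id_{\ell_p(\Gamma)}\oplus (\id\colon E\to(E,\vertiii{\cdot}))$; this $T$ has $m(T)=0$ (it is non-injective), so one cannot directly use it, which tells me the ambient trick must be arranged differently.

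So the cleaner route — and the one I would actually carry out — is to keep the bad operator $T$ itself bounded below while still having \emph{all} operators attain their minimum. This forces $T$ to be bounded below yet minimum-attaining (since $\ma(X,Y)=\call(X,Y)$), with a minimizing sequence that does not converge and with no minimizing point near it; in other words $T\in\ma(X,Y)$ but the \emph{quantitative}/near-point conclusion fails. I would therefore choose $X,Y$ finite-codimensionally close to one of the sequence-space pairs from Example \ref{exs:BPBpp-m} where $\calk(X,Y)=\call(X,Y)$ fails only for a thin family, and arrange the renorming so that the identity-type operator $T$ has its unique ``approximate minimizers'' escaping to infinity in $S_X$; concretely, use $X=Y=\ell_2$ renormed so that $S_X$ contains a sequence $(x_n)$ with $\|T x_n\|\to m(T)$ but $\|x_n-y\|\ge \varepsilon_0$ for every $y$ with $\|Ty\|=m(T)$, while a compact-type perturbation argument (Example \ref{exs:BPBpp-m}(1),(3)) guarantees every operator into $Y$ attains its minimum. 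The verification that $\ma(X,Y)=\call(X,Y)$ will follow from the cardinality/Pitt-type non-injectivity for all $T$ except a controllable family, and for that family a direct compactness check; the failure of \textbf{L}$_{o,o}$-$m$ comes from the fixed $T$ and the sequence $(x_n)$.

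The main obstacle I anticipate is reconciling the two requirements in a \emph{single} pair: ``every operator attains its minimum'' pushes toward pairs with no bounded-below operators (then \textbf{L}$_{o,o}$-$m$ holds vacuously for all of $\call$ by \cite[Lemma 3.7]{Chak20}, which is exactly what we must avoid), whereas ``fails \textbf{L}$_{o,o}$-$m$'' needs a bounded-below operator with a non-convergent, non-approximable minimizing sequence. Resolving this tension is precisely the content of Example \ref{exs:BPBpp-m}(2): one needs a space $X$ with a $1$-complemented infinite-dimensional proper subspace $Y$ whose projection $P$ is bounded below on $Y$ (not on $X$), yet for which no operator $X\to Y$ is injective. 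The delicate bookkeeping will be (i) choosing the Hamel-basis cardinalities and the $\ell_p$-sums so that non-injectivity of every operator is genuinely forced, and (ii) renorming $Y$ (again via a non-remotal-from-$0$ absolutely convex body as in Lemma \ref{lemma:remotality}) so that the restriction of $P$ to $Y$ — now a bounded-below, minimum-attaining operator because of (i) — has minimizing \emph{sequence} but the conclusion ``there is $y\in S_X$ with $\|Py\|=m(P)$ and $\|x-y\|<\varepsilon$'' fails for a suitable $x$. I expect step (ii) to require the same limiting argument as in the proof of Lemma \ref{lemma:remotality}, extracting a subsequence of coefficients tending to $1$ to push the putative minimizer onto the non-remotal set and derive a contradiction.
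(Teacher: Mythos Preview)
Your proposal rests on a misreading of \cite[Lemma 3.7]{Chak20}. That lemma says: if $m(T)=0$ and $\|T(x)\|<\varepsilon$, then there is a \emph{new operator} $S$ with $\|S(x)\|=m(S)=0$ and $\|S-T\|<\varepsilon$. This disposes of the $m(T)=0$ case for the BPBpp-$m$ and the $\mathbf{L}_o$-$m$, where one is allowed to replace the operator. But the $\mathbf{L}_{o,o}$-$m$ demands something different: keeping $T$ fixed, find a \emph{new point} $y\in S_X$ with $\|T(y)\|=m(T)$ and $\|x-y\|<\varepsilon$. When $m(T)=0$ this forces $y\in\ker(T)\cap S_X$, and there is no reason why $\ker(T)$ should come close to $x$ just because $\|T(x)\|$ is small. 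So the premise that ``the $m(T)=0$ case is handled'' is false for this property, and the tension you spend the entire proposal trying to resolve --- needing a bounded-below counterexample operator inside a pair where every operator is non-injective --- simply does not exist.

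The paper's proof exploits exactly this gap. Take $X=c_0(\Gamma)$ with $\Gamma$ large enough that no operator into $Y=c_0$ can be injective; then $m(T)=0$ for every $T\in\call(X,Y)$, so $\ma(X,Y)=\call(X,Y)$ trivially. Fix a countable subset $\{\beta_n\}\subset\Gamma$ and define $T(f)=(f(\beta_n)/n)_n$. Then $m(T)=0$ and $\|T(\delta_{\beta_n})\|=1/n\to 0$, but any $g\in S_X$ with $T(g)=0$ must vanish at every $\beta_n$, hence $\|g-\delta_{\beta_n}\|\geq 1$. Thus for any $\varepsilon<1$ no $\eta(\varepsilon,T)>0$ can work, and $(X,Y)$ fails the $\mathbf{L}_{o,o}$-$m$. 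The non-remotal renormings, the $\ell_p(\Gamma)$-sums, and the ``controllable family'' of bounded-below operators in your sketch are all unnecessary once you realise that the witness operator can --- indeed must, in this setup --- have $m(T)=0$.
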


Recall that for an index set $\Gamma$, the Banach space $c_0(\Gamma)$ is defined as $\{f:\Gamma\rightarrow \bbk:\, \text{for all }\varepsilon>0,\{\gamma\in \Gamma:\, |f(\gamma)|>\varepsilon\}\text{ is finite}\}$, endowed with the supremum norm. 

\begin{proof}[Proof of Theorem \ref{theorem:ma-not-loom}]
Let $\Gamma$ be a big enough index set so that the Hamel basis of $c_0(\Gamma)$ has cardinality bigger than that of $c_0$, and note that $\ma(c_0(\Gamma), c_0)=\call(c_0(\Gamma), c_0)$, as no operator can be injective. Fix a countable set $\{\beta_n\}_n\subset \Gamma$. Define $T:c_0(\Gamma)\rightarrow c_0$ as follows: for each $f\in c_0(\Gamma)$, $T(f)$ is the sequence $(T(f)_n)_n\in c_0$ given by $T(f)_n:=\frac{f(\beta_n)}{n}$ for each $n\in\bbn$.
Note that $\|T\|=1$. Moreover, we have that
$$m(T)\leq \inf_n \|T(\delta_{\beta_n})\| = \inf_n \frac{1}{n} = 0,$$
where $\delta_{\beta_n}$ is the Dirac delta function at $\beta_n$. By definition, it is also clear that for all $f\in c_0(\Gamma)$, $\|T(f)\|=0$ if and only if $f(\beta_n)=0$ for all $n\in\bbn$.

Now, fix $\varepsilon\in (0,1)$ and suppose that there exists $\eta(\varepsilon, T)\in (0,1)$ for which the \textbf{L}$_{o,o}$-$m$ holds. There must exist some $n_0\in\bbn$ such that $\eta(\varepsilon, T)>\frac{1}{n_0}$. Fix $n>n_0$. We have that $\|T(\delta_{\beta_n})\|<\frac{1}{n}<\eta(\varepsilon, T)$, so there must exist $g\in S_{c_0(\Gamma)}$ such that $\|T(g)\|=0$ and $\|g-\delta_n\|<\varepsilon$, but that is a contradiction. Therefore, $(c_0(\Gamma), c_0)$ fails the \textbf{L}$_{o,o}$-$m$.
\end{proof}

\begin{corollary}
There are pairs of Banach spaces $(X,Y)$ satisfying the BPBpp-$m$ but not the \textbf{L}$_{o,o}$-$m$.
\end{corollary}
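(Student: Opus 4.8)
The plan is to combine two facts already established in the excerpt. From Example \ref{exs:BPBpp-m}, any pair of Banach spaces $(X,Y)$ for which no operator in $\call(X,Y)$ is injective has the BPBpp-$m$; and in the proof of Theorem \ref{theorem:ma-not-loom}, exactly such a pair, namely $(c_0(\Gamma), c_0)$ with $\Gamma$ large enough that the Hamel basis of $c_0(\Gamma)$ has cardinality strictly greater than that of $c_0$, was shown to fail the \textbf{L}$_{o,o}$-$m$. So first I would recall that for this choice of $\Gamma$, no operator $T\in\call(c_0(\Gamma),c_0)$ can be injective (its range is separable, so a comparison of Hamel basis cardinalities forces a nontrivial kernel); hence by Example \ref{exs:BPBpp-m}(3), the pair $(c_0(\Gamma),c_0)$ has the BPBpp-$m$. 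Then I would invoke Theorem \ref{theorem:ma-not-loom}, which says precisely that $(c_0(\Gamma),c_0)$ fails the \textbf{L}$_{o,o}$-$m$, to conclude.

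Concretely, the write-up is essentially one line: take $X=c_0(\Gamma)$ and $Y=c_0$ with $\Gamma$ as in the proof of Theorem \ref{theorem:ma-not-loom}; this pair has the BPBpp-$m$ by Example \ref{exs:BPBpp-m}(3) (no operator is injective), and it fails the \textbf{L}$_{o,o}$-$m$ by Theorem \ref{theorem:ma-not-loom}. There is essentially no obstacle here — the content was done in the two preceding results, and this corollary is just the observation that the same pair witnesses both phenomena simultaneously. The only mild subtlety worth a remark is to double-check that the pair produced in Theorem \ref{theorem:ma-not-loom} is indeed of the ``no injective operator'' type, which it is by construction of $\Gamma$; so the BPBpp-$m$ applies to it.

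\begin{proof}
Let $\Gamma$ be an index set as in the proof of Theorem \ref{theorem:ma-not-loom}, that is, big enough so that the Hamel basis of $c_0(\Gamma)$ has cardinality bigger than that of $c_0$. Since the range of any operator in $\call(c_0(\Gamma), c_0)$ is separable, no such operator can be injective. Hence, by Example \ref{exs:BPBpp-m}.(3), the pair $(c_0(\Gamma), c_0)$ has the BPBpp-$m$. On the other hand, by Theorem \ref{theorem:ma-not-loom}, the pair $(c_0(\Gamma), c_0)$ fails the \textbf{L}$_{o,o}$-$m$.
\end{proof}
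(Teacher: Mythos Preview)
Your proof is correct and follows exactly the approach intended by the paper: the corollary is an immediate consequence of Theorem \ref{theorem:ma-not-loom}, since the pair $(c_0(\Gamma),c_0)$ constructed there has no injective operators and hence enjoys the BPBpp-$m$ by Example \ref{exs:BPBpp-m}(3). One small wording nitpick: saying ``since the range is separable, no such operator can be injective'' is not quite self-contained (separability of the range alone does not prevent injectivity); the actual reason, which you do set up correctly, is the Hamel-dimension comparison forced by the choice of $\Gamma$.
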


We will also see later in Theorem \ref{theorem:ell22,ell12} that the \textbf{L}$_{o,o}$-$m$ does not imply the BPBpp-$m$. We turn now our attention to a less restrictive property, the minimum analogous to the \textbf{L}$_o$.

\begin{definition}
A pair of Banach spaces $(X,Y)$ has the \textit{\textbf{L}$_{o}$ property for the minimum} (abbreviated \textbf{L}$_{o}$-$m$) if for every $\varepsilon\in (0,1)$ and every $T\in \call(X,Y)$, there exists $\eta(\varepsilon, T)\in (0,1)$ (converging to $0$ with $\varepsilon$) satisfying that whenever $x\in S_X$ is such that $\|T(x)\|<m(T) (1+\eta(\varepsilon,T))$, if $m(T)>0$, or $\|T(x)\|<\eta(\varepsilon,T)$, if $m(T)=0$, there exist $S\in\call(X,Y)$ and $y\in S_X$ such that $\|S(y)\|=m(S)=m(T)$, and $\|x-y\|<\varepsilon$.
\end{definition}

\begin{remark}
Once more, in view of \cite[Lemma 3.7]{Chak20}, the condition ``$\|T(x)\|<\eta(\varepsilon,T)$ if $m(T)=0$'' can be removed from the definition by taking $\eta(\varepsilon, T)\leq \varepsilon$ for these operators.
\end{remark}

N. Bala and G. Ramesh showed in \cite[Theorems 3.5 and 3.8]{BR21a} that if $H$ is a complex Hilbert space, then $(H,H)$ satisfies the \textbf{L}$_o$-$m$. We will improve this result in Corollary \ref{cor:Y-un-conv} by showing that a much wider class of Banach spaces satisfies the following strengthening of this property, where instead of having $\eta$ depending on each operator, we let it depend on a class of operators in a more uniform way.

\begin{definition}
A pair of Banach spaces $(X,Y)$ is said to have the \textit{bounded Bishop-Phelps-Bollob\'as property for the minimum norm} (or just bounded BPBp-$m$) if given $\varepsilon>0$ and $0\leq \rho\leq R$ there exists some $\eta(\varepsilon, R, \rho)\in (0,1)$ (converging to $0$ with $\varepsilon$) such that whenever $x\in S_X$ and $T\in\mathcal{L}(X,Y)$ with $\|T\|\leq R$ and $m(T)= \rho$ satisfy $\|T(x)\|<m(T)(1+\eta(\varepsilon, R, \rho))$ if $\rho>0$, or $\|T(x)\|<\eta(\varepsilon, R, \rho)$ if $\rho=0$, there exist $S\in\mathcal{L}(X,Y)$ and $y\in S_X$ such that 
$$\|S(y)\|=m(S)=m(T)=\rho,\quad \|T-S\|<\varepsilon,\quad \text{and}\quad \|x-y\|<\varepsilon.$$
\end{definition}

As usual, \cite[Lemma 3.7]{Chak20} yields an equivalent version of this property. We will include the details of the proof for this one.

\begin{proposition}\label{remark:equiv-def}
The pair of Banach spaces $(X,Y)$ has the bounded BPBp-$m$ if and only if given $\varepsilon>0$ and $R\geq 1$ there exists some $\eta(\varepsilon, R)>0$ such that whenever $x\in S_X$ and $T\in\mathcal{L}(X,Y)$ with $\|T\|\leq R$ and $m(T)= 1$ satisfy $\|T(x)\|<1+\eta(\varepsilon, R)$, there exist $S\in\mathcal{L}(X,Y)$ and $y\in S_X$ such that
$$\|S(y)\|=m(S)=m(T)=1,\quad \|T-S\|<\varepsilon,\quad \text{and}\quad  \|x-y\|<\varepsilon.$$
\end{proposition}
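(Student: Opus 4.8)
The plan is to prove the two implications of the equivalence separately, with the forward direction being essentially trivial and the reverse direction requiring a rescaling argument combined with the observation from \cite[Lemma 3.7]{Chak20} that handles the $\rho=0$ case.

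First, suppose $(X,Y)$ has the bounded BPBp-$m$. To obtain the ``simplified'' version, given $\varepsilon>0$ and $R\geq 1$, I would simply set $\eta(\varepsilon,R):=\eta(\varepsilon,R,1)$, the value provided by the bounded BPBp-$m$ for the parameters $\rho=1$ and the bound $R$ on the norm. If $x\in S_X$ and $T\in\call(X,Y)$ satisfy $\|T\|\leq R$, $m(T)=1$, and $\|T(x)\|<1+\eta(\varepsilon,R)=m(T)(1+\eta(\varepsilon,R,1))$, then the bounded BPBp-$m$ directly yields the desired $S$ and $y$. So this direction is immediate.

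For the converse, suppose the simplified version holds, and let $\varepsilon>0$ and $0\leq\rho\leq R$ be given. I would split into two cases. If $\rho=0$: by \cite[Lemma 3.7]{Chak20}, whenever $T\in\call(X,Y)$ has $m(T)=0$ and $x\in S_X$ satisfies $\|T(x)\|<\varepsilon$, there is $S\in\call(X,Y)$ with $\|S(x)\|=m(S)=0$ and $\|T-S\|<\varepsilon$; taking $y=x$ gives everything required, so one sets $\eta(\varepsilon,R,0):=\varepsilon$ (or $\min\{\varepsilon,1/2\}$ to stay in $(0,1)$) and this works regardless of $R$. If $\rho>0$: the idea is to rescale. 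Given $T$ with $\|T\|\leq R$ and $m(T)=\rho$, consider $\widetilde{T}:=\frac{1}{\rho}T$, which has $m(\widetilde{T})=1$ and $\|\widetilde{T}\|\leq R/\rho$. Note that $R/\rho$ need not be bounded as $\rho\to 0$, so I would invoke the simplified version with the norm bound $\widetilde{R}:=\max\{1,R/\rho\}$, obtaining $\eta(\varepsilon/\rho,\widetilde{R})$, and then set $\eta(\varepsilon,R,\rho):=\min\{\eta(\varepsilon/\rho,\widetilde R),\,1/2\}$ — wait, this is not quite right since we need $\eta$ to converge to $0$ with $\varepsilon$ uniformly; instead I would apply the simplified version with parameters $\varepsilon':=\varepsilon/\rho$... actually the cleanest bookkeeping is: the hypothesis $\|T(x)\|<m(T)(1+\eta(\varepsilon,R,\rho)) = \rho(1+\eta(\varepsilon,R,\rho))$ translates to $\|\widetilde{T}(x)\|<1+\eta(\varepsilon,R,\rho)$, so if we define $\eta(\varepsilon,R,\rho):=\eta(\varepsilon,\widetilde R)$ from the simplified version (which converges to $0$ with $\varepsilon$ for fixed $\widetilde R$), we get $\widetilde S\in\call(X,Y)$ and $y\in S_X$ with $\|\widetilde S(y)\|=m(\widetilde S)=1$ and $\|\widetilde T-\widetilde S\|<\varepsilon/\rho$... and then $S:=\rho\widetilde S$ satisfies $\|S(y)\|=m(S)=\rho=m(T)$ and $\|T-S\|=\rho\|\widetilde T-\widetilde S\|<\varepsilon$, with $\|x-y\|<\varepsilon/\rho$; to control the last quantity one applies the simplified version with $\varepsilon$ replaced by $\min\{\varepsilon,\rho\varepsilon\}=\rho\varepsilon$ when $\rho\leq 1$ and $\varepsilon$ otherwise — in any case one can absorb the factor by choosing $\varepsilon'':=\min\{\varepsilon,\rho\varepsilon\}$ in the application.

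\textbf{Main obstacle.} The only real subtlety is the bookkeeping of the parameters in the rescaling: one must ensure that the norm bound passed to the simplified version ($\widetilde R = \max\{1,R/\rho\}$) is legitimate (this is why the simplified version is stated for all $R\geq 1$ rather than a fixed $R$), and that after dividing the perturbation and displacement estimates by $\rho$ one still gets bounds below $\varepsilon$ — handled by shrinking the $\varepsilon$ fed into the simplified version. There is no deep content here; \cite[Lemma 3.7]{Chak20} disposes of $\rho=0$ entirely, and homogeneity of $m(\cdot)$, $\|\cdot\|$, and the evaluation $\|T(x)\|$ under scalar multiplication does the rest.
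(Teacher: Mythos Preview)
Your approach is exactly the paper's: specialize to $\rho=1$ for the forward direction; for the converse, dispose of $\rho=0$ via \cite[Lemma 3.7]{Chak20} with $\eta(\varepsilon,R,0)=\varepsilon$, and for $\rho>0$ rescale $T$ to $\widetilde T=T/\rho$, apply the simplified version, and scale back. The structure is correct and there is no missing idea.

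There is, however, a bookkeeping slip in your final parameter choice. You propose feeding $\varepsilon'':=\min\{\varepsilon,\rho\varepsilon\}$ into the simplified version. That yields $\|x-y\|<\varepsilon''\leq\varepsilon$ and $\|T-S\|=\rho\,\|\widetilde T-\widetilde S\|<\rho\,\varepsilon''$, which is bounded by $\varepsilon$ when $\rho\leq 1$ but equals $\rho\varepsilon>\varepsilon$ when $\rho>1$. The correct choice --- and the one the paper makes --- is $\varepsilon'':=\min\{\varepsilon,\varepsilon/\rho\}$: then $\|x-y\|<\varepsilon''\leq\varepsilon$ and $\|T-S\|<\rho\,\varepsilon''\leq\rho\cdot(\varepsilon/\rho)=\varepsilon$, covering both regimes. (Note also that $R/\rho\geq 1$ automatically since $\rho\leq R$, so the $\max\{1,R/\rho\}$ is harmless but unnecessary.) With that single correction your argument is complete and coincides with the paper's proof.
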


\begin{proof}
We prove both implications. Let $(X,Y)$ have the bounded BPBp-$m$ with the function $\eta(\varepsilon, R, \rho)$. Define the mapping $\eta_2(\varepsilon, R):=\eta(\varepsilon, R, 1)$. Fix $\varepsilon>0$ and $R>0$. Let $T\in\call(X, Y)$ and $x\in S_X$ be such that $m(T)=1$, $\|T\|\leq R$, and $\|T(x)\|<1+\eta_2(\varepsilon, R)=1+\eta(\varepsilon, R, 1)$. Then there exist $S\in\mathcal{L}(X,Y)$ and $y\in S_X$ such that
$$\|S(y)\|=m(S)=m(T)=1,\quad \|T-S\|<\varepsilon,\quad \text{and}\quad  \|x-y\|<\varepsilon.$$

Conversely, let $(X,Y)$ satisfy the alternative definition with the function $\eta(\varepsilon, R)$. Define the mapping $\eta_2(\varepsilon, R, \rho):=\eta\left( \min\left\{ \varepsilon, \frac{\varepsilon}{\rho} \right\}, \frac{R}{\rho} \right)$, if $\rho>0$, and $\eta_2(\varepsilon, R, 0):=\varepsilon$. We distinguish 2 cases now.

\textit{Case 1}: $\rho=0$. Fix $\varepsilon>0$ and $R\geq 0$. Let $T\in\call(X, Y)$ and $x\in S_X$ be such that $m(T)=0$, $\|T\|\leq R$, and $\|T(x)\|<\eta_2(\varepsilon, R, 0)= \varepsilon$. Then, by \cite[Lemma 3.7]{Chak20}, there exist $S\in\call(X,Y)$ and $y=x\in S_X$ such that $m(S)=m(T)=0=\|S(y)\|$, $\|S-T\|<\varepsilon$, and $\|x-y\|=0<\varepsilon$.

\textit{Case 2}: $\rho>0$. Fix $\varepsilon>0$ and $R\geq 0$. Let $T\in\call(X, Y)$ and $x\in S_X$ be such that $m(T)=\rho$, $\|T\|\leq R$, and $\|T(x)\|<\rho\cdot (1+\eta_2(\varepsilon, R, \rho))$. Then $m(\frac{T}{\rho})=1$, and $\|\frac{T}{\rho}(x)\|<1+\eta_2(\varepsilon, R, \rho)\leq 1+\eta\left( \min\left\{ \varepsilon, \frac{\varepsilon}{\rho} \right\}, \frac{R}{\rho} \right)$. To simplify, call $\delta:=\min\left\{ \varepsilon, \frac{\varepsilon}{\rho} \right\}$. Thus, there are $S\in \call(X, Y)$ and $y\in S_X$ such that $m(S)=m(\frac{T}{\rho})=1=\|S(y)\|$, $\|S-\frac{T}{\rho}\|<\delta$, and $\|x-y\|<\delta$. Then the operator $\rho S$ satisfies that $m(\rho S)=m(T)=\rho=\|\rho S(y)\|$, $\|x-y\|<\delta\leq \varepsilon$, and $\|\rho S-T\|<\rho\delta\leq \varepsilon$.

This shows that $(X, Y)$ has the bounded BPBp-$m$.\qedhere
\end{proof}

Note that the following implications hold trivially from the definitions, and they provide several pairs of Banach spaces satisfying the \textbf{L}$_o$-$m$.

\begin{proposition}
Let $X$ and $Y$ be Banach spaces.
\begin{enumerate}
\item[a)] If $(X,Y)$ has the BPBpp-$m$, then it has the bounded BPBp-$m$.
\item[b)] If $(X, Y)$ has either the bounded BPBp-$m$ or the \textbf{L}$_{o,o}$-$m$, then it has the \textbf{L}$_o$-$m$.
\item[c)] If $(X,Y)$ has the \textbf{L}$_o$-$m$, then $\ma(X,Y)$ is dense in $\call(X,Y)$.
\end{enumerate}
\end{proposition}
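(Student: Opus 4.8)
All three implications are meant to follow "trivially from the definitions," so the strategy is simply to unwind each definition and check the quantifier dependencies line up. For (a), suppose $(X,Y)$ has the BPBpp-$m$ with modulus $\eta(\varepsilon,\rho)$; I would show it has the bounded BPBp-$m$ by setting, for $0\le\rho\le R$, $\tilde\eta(\varepsilon,R,\rho):=\eta(\varepsilon,\rho)$, i.e.\ just forgetting the extra parameter $R$. Given $x\in S_X$ and $T\in\call(X,Y)$ with $\|T\|\le R$ and $m(T)=\rho$ satisfying the relevant smallness hypothesis on $\|T(x)\|$, the BPBpp-$m$ produces $S\in\call(X,Y)$ with $m(S)=\|S(x)\|=\rho$ and $\|S-T\|<\varepsilon$; taking $y=x$ gives $\|x-y\|=0<\varepsilon$, so all three conclusions of the bounded BPBp-$m$ hold. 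One only needs to note that $\tilde\eta$ still converges to $0$ with $\varepsilon$, which is immediate.

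For (b), the point is that both the bounded BPBp-$m$ and the $\mathbf{L}_{o,o}$-$m$ are formally stronger than the $\mathbf{L}_o$-$m$: the $\mathbf{L}_o$-$m$ asks for $S$ and $y$ with $\|S(y)\|=m(S)=m(T)$ and $\|x-y\|<\varepsilon$, with a modulus $\eta(\varepsilon,T)$ allowed to depend on all of $T$. If $(X,Y)$ has the bounded BPBp-$m$, then for a fixed $T$ put $R:=\max\{\|T\|,1\}$ and $\rho:=m(T)$ and define $\eta(\varepsilon,T):=\eta(\varepsilon,R,\rho)$; the bounded BPBp-$m$ then delivers $S$ and $y$ with the stronger conclusion $\|S(y)\|=m(S)=m(T)$, $\|T-S\|<\varepsilon$, and $\|x-y\|<\varepsilon$, so in particular the $\mathbf{L}_o$-$m$ conclusion holds. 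If instead $(X,Y)$ has the $\mathbf{L}_{o,o}$-$m$, then with the same modulus $\eta(\varepsilon,T)$ it produces $y\in S_X$ with $\|T(y)\|=m(T)$ and $\|x-y\|<\varepsilon$; taking $S:=T$ gives $\|S(y)\|=m(S)=m(T)$ and $\|x-y\|<\varepsilon$, which is exactly the $\mathbf{L}_o$-$m$ (with the trivial bound $\|T-S\|=0<\varepsilon$). In both cases one should check the hypothesis region matches: the $\mathbf{L}_o$-$m$ hypothesis on $\|T(x)\|$ is the same inequality that appears in the stronger property, so there is nothing to adjust.

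For (c), the $\mathbf{L}_o$-$m$ gives, for every $\varepsilon\in(0,1)$ and every $T\in\call(X,Y)$, some $S\in\call(X,Y)$ with $\|S(y)\|=m(S)=m(T)$ for a suitable $y\in S_X$ and $\|T-S\|<\varepsilon$ — wait, actually the $\mathbf{L}_o$-$m$ as stated does not bound $\|T-S\|$. Here one must instead argue as follows: fix $T$ and $\varepsilon>0$; choose any $x\in S_X$ with $\|T(x)\|$ in the admissible range (this is possible since $m(T)=\inf_{x\in S_X}\|T(x)\|$, so for the modulus $\eta(\varepsilon,T)$ there is $x\in S_X$ with $\|T(x)\|<m(T)(1+\eta(\varepsilon,T))$ when $m(T)>0$, and with $\|T(x)\|<\eta(\varepsilon,T)$ when $m(T)=0$). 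The $\mathbf{L}_o$-$m$ then yields $S$ and $y$ with $\|S(y)\|=m(S)=m(T)$ and $\|x-y\|<\varepsilon$ — but again no control on $\|T-S\|$. So density must come from a different route; the natural one is that the $\mathbf{L}_o$-$m$ is by definition the version of the BPBp-$m$ in which the modulus may depend on $T$, and the BPBp-type property in the spirit of the opening Remark does include $\|S-T\|<\varepsilon$. I would therefore re-read the $\mathbf{L}_o$-$m$ definition as implicitly containing $\|S-T\|<\varepsilon$ (consistent with the $\mathbf{L}_o$ convention in the Introduction's Remark, where $S$ is produced with $\|S-T\|<\varepsilon$); then (c) is immediate: for each $T$ and each $\varepsilon$, pick $x\in S_X$ witnessing the infimum to within $\eta(\varepsilon,T)$, apply the $\mathbf{L}_o$-$m$ to get $S\in\ma(X,Y)$ with $\|S-T\|<\varepsilon$, and conclude $\ma(X,Y)$ is dense.

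**Main obstacle.** The only genuinely non-mechanical point is (c): one must exhibit a point $x\in S_X$ at which $T$ is "almost minimizing" in order to activate the $\mathbf{L}_o$-$m$ hypothesis, and this is exactly the definition of $m(T)$ as an infimum over $S_X$, so no issue arises — but one should be careful about the edge case where $S_X=\emptyset$ (i.e.\ $X=\{0\}$), which is vacuous, and about whether $m(T)>0$ with the infimum not attained, which is fine since we only need an approximating $x$, not a minimizer. I expect the write-up to be three or four short lines per item, with the bulk of the care going into stating precisely which modulus one feeds into which property.
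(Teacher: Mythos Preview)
The paper offers no proof beyond the remark that the implications ``hold trivially from the definitions,'' so your detailed unwinding is already more than what the paper provides. Your arguments for (a) and (b) are correct and are exactly the intended one-line checks: forget the parameter $R$ and take $y=x$ for (a); specialize $\eta(\varepsilon,R,\rho)$ to $\eta(\varepsilon,\|T\|,m(T))$, respectively take $S=T$, for the two halves of (b).

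For (c) you have correctly spotted a genuine issue with the paper as written: the printed definition of the $\mathbf{L}_o$-$m$ produces $S$ and $y$ with $\|S(y)\|=m(S)=m(T)$ and $\|x-y\|<\varepsilon$, but \emph{omits} the clause $\|S-T\|<\varepsilon$. Without that clause, (c) does not follow, since nothing ties $S$ to $T$. Your resolution---reading the definition as implicitly including $\|S-T\|<\varepsilon$, by analogy with the norm-side $\mathbf{L}_o$ in the Introduction and with the bounded BPBp-$m$ just above---is the right call; it is almost certainly an inadvertent omission in the paper, and with that clause restored your argument for (c) (pick $x\in S_X$ witnessing the infimum to within $\eta(\varepsilon,T)$, apply the property, obtain $S\in\ma(X,Y)$ with $\|S-T\|<\varepsilon$) is correct.
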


Also, note that Theorem \ref{theorem:ma-not-loom} implies, in particular, that there are pairs of Banach spaces $(X,Y)$ satisfying the bounded BPBp-$m$ (in particular, the \textbf{L}$_o$-$m$) but failing the \textbf{L}$_{o,o}$-$m$, despite the fact that every operator attains its minimum.

Theorems 3.5 and 3.6 of \cite{Chak21} imply, in particular, that pairs of finite-dimensional Banach spaces $(X,Y)$ always satisfy the \textbf{L}$_o$-$m$, but by the nature of its proof by contradiction, the $\eta$ depends on each particular operator $T$ a priori. We show now that, actually, these pairs of spaces have the bounded BPBp-$m$. The proof is based on that of \cite[Theorem 2.1]{AAGM08}.

\begin{proposition}\label{prop-fin-dim-bd-bpbp-m}
If $Y$ is a finite-dimensional Banach space and $X$ is any Banach space, then $(X, Y)$ has the bounded BPBp-$m$.
\end{proposition}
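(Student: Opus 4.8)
The plan is to adapt the compactness argument from \cite[Theorem 2.1]{AAGM08} to the minimum norm setting. By Proposition \ref{remark:equiv-def}, it suffices to work with the normalized version: given $\varepsilon>0$ and $R\geq 1$, we must produce $\eta=\eta(\varepsilon,R)>0$ such that any $T\in\call(X,Y)$ with $\|T\|\leq R$, $m(T)=1$, and any $x\in S_X$ with $\|T(x)\|<1+\eta$ can be perturbed by less than $\varepsilon$ to an operator $S$ attaining $m(S)=1$ at some $y\in S_X$ with $\|x-y\|<\varepsilon$. First I would argue by contradiction: suppose the bounded BPBp-$m$ fails for some $\varepsilon_0>0$ and $R\geq 1$. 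Then for each $n$ there are $T_n\in\call(X,Y)$ with $\|T_n\|\leq R$, $m(T_n)=1$, and $x_n\in S_X$ with $\|T_n(x_n)\|<1+\tfrac1n$, but no $\varepsilon_0$-perturbation of $T_n$ attains its minimum $1$ at a point $\varepsilon_0$-close to $x_n$.

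The key point is that $Y$ being finite-dimensional forces the relevant data to live in a compact set, even though $X$ is arbitrary. Each $T_n$ restricted to the one-dimensional span of $x_n$ is determined by the single vector $T_n(x_n)\in Y$, which lies in the ball $R\cdot B_Y$; by compactness of $R\cdot B_Y$, after passing to a subsequence we may assume $T_n(x_n)\to y_0$ for some $y_0\in Y$ with $\|y_0\|=1$ (since $1\leq\|T_n(x_n)\|<1+\tfrac1n$). Now the natural candidate perturbation is $S_n:=T_n+\phi_n\otimes(y_0-T_n(x_n))$ for a suitable norm-one functional $\phi_n$ with $\phi_n(x_n)=1$ (obtained from Hahn-Banach); this $S_n$ satisfies $S_n(x_n)=y_0$, so $\|S_n(x_n)\|=1$, and $\|S_n-T_n\|\leq\|y_0-T_n(x_n)\|\to 0$, so for large $n$ the perturbation is smaller than $\varepsilon_0$ and we keep $y=x_n$ (distance $0<\varepsilon_0$). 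The only thing that could fail is $m(S_n)=1$: a priori the perturbation could decrease the minimum below $1$.

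The hard part will be controlling $m(S_n)$ from below. I would handle this as follows. Note $m(S_n)\geq m(T_n)-\|S_n-T_n\|\geq 1-\delta_n$ where $\delta_n:=\|y_0-T_n(x_n)\|\to 0$, so $S_n$ almost attains its minimum at $x_n$, but $m(S_n)$ might be strictly less than $1$. To fix this, rather than aiming directly for $m(S)=1$, I would first use \cite[Lemma 3.7]{Chak20} for the degenerate cases and otherwise rescale: since $S_n(x_n)=y_0$ has norm $1$ and $m(S_n)\in[1-\delta_n,1]$, one can compose with a small perturbation that pushes the minimum back up to exactly $1$ while keeping $x_n$ (nearly) a minimizer. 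Concretely, because $\dim Y<\infty$, the function $u\mapsto\|S_n(u)\|$ on $S_X$ need not attain its infimum, but one can instead perturb $S_n$ to $\widetilde S_n$ by adding a small multiple of a functional vanishing on a minimizing direction — or, cleaner, invoke that $\ma(X,Y)=\call(X,Y)$ since $\dim Y<\infty$, so $m(S_n)$ is attained, and then a final rescaling-type perturbation of size $O(\delta_n)$ makes the attained minimum equal to $1$ at a point still close to $x_n$ because $\|S_n(x_n)\|=1$ is already within $\delta_n$ of $m(S_n)$ and $\delta_n\to 0$. Thus for $n$ large the composite perturbation has norm $<\varepsilon_0$, its minimum equals $1$ and is attained at a point within $\varepsilon_0$ of $x_n$, contradicting the choice of $T_n$, $x_n$. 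This yields the existence of $\eta(\varepsilon,R)$ and completes the proof.
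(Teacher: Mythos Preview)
Your argument has a genuine gap at the very end. After constructing $S_n$ with $S_n(x_n)=y_0$, $\|S_n(x_n)\|=1$, and $m(S_n)\in[1-\delta_n,1]$, you need to pass to some $\widetilde S_n$ with $m(\widetilde S_n)=1$ \emph{attained at a point close to $x_n$}. Your justification --- ``a point still close to $x_n$ because $\|S_n(x_n)\|=1$ is already within $\delta_n$ of $m(S_n)$'' --- is circular: the assertion that an almost-minimizer must lie near an actual minimizer is precisely an $\mathbf{L}_{o,o}$-$m$ type statement, and while that property does hold here (both spaces are finite-dimensional), its $\eta$ depends on the individual operator. You would need it \emph{uniformly} over the moving family $\{S_n\}$, which is essentially the bounded BPBp-$m$ you are trying to prove. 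Rescaling $S_n$ by $1/m(S_n)$ does give minimum exactly $1$, but the minimizer could sit far from $x_n$; nothing you wrote rules that out.

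What you are missing is the observation that once you assume the contradiction sequence $(T_n,x_n)$ exists with $m(T_n)=1$, each $T_n$ is injective and hence $\dim X\leq\dim Y<\infty$. At that point both $S_X$ and the closed ball $\{T:\|T\|\leq R\}\subset\call(X,Y)$ are compact, so you can extract subsequences $T_n\to S$ and $x_n\to y$ directly; continuity of $m$ gives $m(S)=1=\|S(y)\|$, and for large $n$ the pair $(S,y)$ itself violates the assumed failure at $(T_n,x_n)$. This is exactly the paper's route: it first disposes of the case $\dim X=\infty$ (vacuous in your normalized formulation, or via \cite[Lemma~3.7]{Chak20} in the original one), and then runs the straight compactness argument in $\call(X,Y)\times S_X$ without any rank-one perturbation. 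Your detour through $S_n=T_n+\phi_n\otimes(y_0-T_n(x_n))$ only uses compactness of $B_Y$, which is not enough; the missing compactness of $S_X$ and of the operator ball is what actually closes the argument.
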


\begin{proof}
Note that the case where $X$ is infinite-dimensional is a clear consequence of \cite[Lemma 3.7]{Chak20}, since no operator is injective, and so the pair $(X,Y)$ has the BPBpp-$m$. So assume now that $X$ is finite-dimensional.

We will argue by contradiction. Note that the case $\rho=0$ is already solved at \cite[Lemma 3.7]{Chak20} with the uniform choice $\eta=\varepsilon$. So we only need to prove this for positive $\rho$. Suppose that this is not the case. Then, there exist $\varepsilon>0$, $R>0$, and $0<\rho\leq R$ such that for every $n\in \bbn$, there are an operator $T_n\in \call(X, Y)$ with $\|T_n\|\leq R$ and $m(T_n)=\rho$ and a point $x_n\in S_X$ such that $\|T_n(x_n)\|\leq \rho\cdot (1+\frac{1}{n})$, but such that for every operator $S\in \call(X, Y)$ with $m(S)=\rho$ and $\|S-T\|< \varepsilon$ and for every $y\in S_X$ with $\|x_n-y\|<\varepsilon$, we have that $\|S(y)\|>\rho$. Now, by compactness, note that up to a subsequence, $(T_n)_n$ and $(x_n)_n$ respectively converge to some $S\in \call(X, Y)$ and $y\in S_X$ such that $\|S(y)\|=m(S)=\rho$. But note that if $n$ is big enough, then $\|S-T_n\|<\varepsilon$ and $\|y-x_n\|<\varepsilon$, which is a contradiction with the assumption.
\end{proof}

Note (see for instance \cite[Lemma 3.2]{Chak20}) that an invertible operator $T\in\call(X,Y)$ attains its norm at $x\in S_X$ if and only if its inverse attains its minimum at $\frac{T(x)}{\|T(x)\|}$. This motivates wondering if there is a relation between a pair $(X,Y)$ satisfying the BPBp and the pair $(Y,X)$ satisfying the bounded BPBp-$m$. Proposition \ref{prop-fin-dim-bd-bpbp-m} shows that if $Y$ is a strictly convex space that is not uniformly convex, then $(Y, \ell_1^2)$ has the bounded BPBp-$m$, but the pair $(\ell_1^2, Y)$ cannot have the BPBp (see \cite[Corollary 3.3]{ACKLM15}). However, about the other implication, we can show the following.

\begin{theorem}\label{theo:bpbp-then-bpbpm}
Let $X$ and $Y$ be Banach spaces such that given $\varepsilon\in (0,1)$, there is some $\eta=\eta(\varepsilon)\in (0,1)$ such that for every monomorphism $T\in\bl(X, Y)$, the pair $(T(X), X)$ has the BPBp with $\eta$. Then $(X, Y)$ has the bounded BPBp-$m$.
\end{theorem}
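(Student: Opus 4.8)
The plan is to transfer a Bishop--Phelps--Bollob\'as approximation taking place in the pairs $(T(X),X)$ into a minimum-norm approximation for the pair $(X,Y)$, using the elementary duality between norm-attainment of an isomorphism and minimum-attainment of its inverse recorded in \cite[Lemma 3.2]{Chak20}. First I would reduce to the only interesting case: if $X$ is infinite-dimensional and $T\in\call(X,Y)\setminus\bl(X,Y)$, then $m(T)=0$ and the desired conclusion is already provided by \cite[Lemma 3.7]{Chak20} with the uniform choice $\eta=\varepsilon$ (and $S=T$, $y=x$), so this case contributes nothing to the definition of $\eta(\varepsilon,R,\rho)$; thus I only need to handle $\rho>0$, i.e. $T\in\bl(X,Y)$. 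By the equivalent formulation in Proposition \ref{remark:equiv-def}, it suffices to treat $m(T)=1$ and produce $\eta=\eta(\varepsilon,R)$.

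So fix $\varepsilon\in(0,1)$ and $R\geq 1$, and let $\eta_0=\eta(\varepsilon')\in(0,1)$ be the BPBp-constant supplied by the hypothesis for a suitable $\varepsilon'$ to be calibrated below (shrinking later). Let $T\in\bl(X,Y)$ with $\|T\|\leq R$ and $m(T)=1$, and let $x\in S_X$ with $\|T(x)\|<1+\eta_0$. Write $Z:=T(X)$, so $T\in\iso(X,Z)$ with $\|T\|\leq R$ and, since $m(T)=1$, also $\|T^{-1}\|=1/m(T)=1$, i.e. $T^{-1}\in S_{\call(Z,X)}$. Put $z:=T(x)/\|T(x)\|\in S_Z$; then
$$\|T^{-1}(z)\|=\frac{\|x\|}{\|T(x)\|}=\frac{1}{\|T(x)\|}>\frac{1}{1+\eta_0}>1-\eta_0,$$
so $T^{-1}$ almost attains its norm at $z$. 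Apply the BPBp for the pair $(Z,X)=(T(X),X)$ with constant $\eta_0$: there are $U\in S_{\call(Z,X)}$ and $w\in S_Z$ with $\|U(w)\|=1$, $\|U-T^{-1}\|<\varepsilon'$, and $\|w-z\|<\varepsilon'$. Since $\iso(Z,X)$ is open in $\call(Z,X)$ and $T^{-1}$ lies in it, for $\varepsilon'$ small (depending only on $T^{-1}$, hence controllable via $\|T^{-1}\|=1$ and standard Neumann-series estimates) $U$ is again an isomorphism, so $U^{-1}\in\iso(X,Z)\subset\call(X,Y)$ makes sense; set $S:=U^{-1}$.

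It remains to check the three conclusions for $S$, $y:=U(w)\in S_X$. By \cite[Lemma 3.2]{Chak20}, since $U$ attains its norm at $w\in S_Z$ with $\|U(w)\|=1=\|U\|$, its inverse $S=U^{-1}$ attains its minimum at $U(w)/\|U(w)\|=y\in S_X$, and $m(S)=1/\|U\|=1=m(T)$; in particular $\|S(y)\|=m(S)=1$. For $\|S-T\|$: from $\|U-T^{-1}\|<\varepsilon'$ and $\|U^{-1}\|=1$, $\|T^{-1}\|=1$ one gets $\|U^{-1}-T\|=\|U^{-1}(T^{-1}-U)T\|\leq \|U^{-1}\|\,\|T^{-1}-U\|\,\|T\|<\varepsilon' R$, so choosing $\varepsilon'\leq \varepsilon/R$ (and small enough for invertibility of $U$, which also only costs a factor depending on $\|T^{-1}\|=1$) gives $\|S-T\|<\varepsilon$. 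For $\|x-y\|$: $x=T^{-1}(z)/\|T^{-1}(z)\|$ is the normalization of a vector of norm $\|T^{-1}(z)\|\in(1/(1+\eta_0),1]$, while $y=U(w)$ has $\|y\|=1$ and $\|U(w)-T^{-1}(z)\|\leq\|U-T^{-1}\|+\|T^{-1}(w-z)\|<\varepsilon'+\varepsilon'=2\varepsilon'$; combining with $|\,\|T^{-1}(z)\|-1\,|<\eta_0$ yields $\|x-y\|\leq \|x - T^{-1}(z)\| + \|T^{-1}(z)-y\| < C(\eta_0+\varepsilon')$ for an absolute constant $C$, so shrinking $\varepsilon'$ and $\eta_0$ further (both functions of $\varepsilon$ only, since $R$ enters only through the harmless factor $\varepsilon/R$) makes this $<\varepsilon$. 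Tracking all the constraints, one defines $\eta(\varepsilon,R)$ as the minimum of the various thresholds, and it depends only on $\varepsilon$ (and trivially on $R$ via $\varepsilon/R$), as required.

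The main obstacle is the bookkeeping in the last step: one must choose $\varepsilon'$ and the auxiliary BPBp-input $\eta_0$ small enough to simultaneously (i) keep $U$ invertible, (ii) control $\|S-T\|$ by the factor $R$, and (iii) control $\|x-y\|$, all while ensuring the final $\eta$ depends only on $\varepsilon$ and $R$ in the prescribed way; the subtlety is that the hypothesis gives a \emph{single} $\eta(\varepsilon)$ working for \emph{all} monomorphisms $T$ at once, which is exactly what lets the argument be uniform over $T$ with $\|T\|\leq R$, $m(T)=1$ — without that uniformity the resulting constant would depend on $T$ and we would only recover the (weaker) $\mathbf{L}_o$-$m$ property.
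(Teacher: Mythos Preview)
Your approach is the same as the paper's: reduce to bounded-below operators with $m(T)=1$, pass to the inverse $T^{-1}\colon T(X)\to X$ (which then lies in $S_{\call(Z,X)}$ automatically), apply the uniform BPBp hypothesis to it, and invert the resulting operator back. The reduction via Proposition~\ref{remark:equiv-def} and the use of \cite[Lemma 3.2]{Chak20} are exactly what the paper does.

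There is one genuine slip in the bookkeeping: you assert $\|U^{-1}\|=1$, but this is unjustified and generally false --- from $\|U\|=1$ you only get $\|U^{-1}\|\geq 1$. What you actually have is a Neumann-series bound: since $U$ is a perturbation of $T^{-1}$ and $\|(T^{-1})^{-1}\|=\|T\|\leq R$, you obtain $\|U^{-1}\|\leq R/(1-R\varepsilon')$ once $\varepsilon'<1/R$. This changes your estimate $\|S-T\|<\varepsilon' R$ into
\[
\|S-T\|=\|U^{-1}(T^{-1}-U)T\|\leq \frac{R}{1-R\varepsilon'}\cdot \varepsilon'\cdot R=\frac{R^2\varepsilon'}{1-R\varepsilon'},
\]
which is precisely the paper's bound (with $\rho=1$). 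Choosing $\varepsilon'$ so that $R^2\varepsilon'/(1-R\varepsilon')<\varepsilon$ (still a condition depending only on $\varepsilon$ and $R$) repairs the argument, and the rest of your estimates for $\|x-y\|$ go through unchanged.
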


\begin{proof}
Let $\varepsilon\in (0,1)$, and let $0\leq \rho\leq R$ be given. Note that if $\rho=0$, then using $\delta(\varepsilon)=\varepsilon$, the claim is already contained in the proof of \cite[Lemma 3.7]{Chak20}. So assume now that $\rho>0$. Let $0<\varepsilon_1=\varepsilon_1(\varepsilon)$ be small enough so that $0<4\varepsilon_1 < \frac{\varepsilon}{2}$, and
$$0<\frac{R^2 \varepsilon_1 /\rho}{1-\varepsilon_1 R/\rho} < \varepsilon.$$
Let 
$$\delta=\delta(\varepsilon, R, \rho):=\min\left\{\frac{\varepsilon}{2},\, \frac{\eta(\varepsilon_1) \rho}{1-\eta(\varepsilon_1)} \right\}.$$
Let $T\in \call(X, Y)$ be such that $\rho=m(T)\leq \|T\|\leq R$, and let $x\in X$ be such that $\|T(x)\| < m(T) (1+\delta)\leq m(T) (1+ \frac{\eta(\varepsilon_1) \rho}{1-\eta(\varepsilon_1)})$. Note that $T:X\rightarrow T(X)$ is an isomorphism. Denote $Z:=T(X)$. Then, if $U:=\frac{T^{-1}}{\|T^{-1}\|}$ and $x_0:=\frac{T(x)}{\|T(x)\|}$, we have
$$\left\|U \left( x_0 \right) \right\|\geq (1-\eta(\varepsilon_1)).$$
Therefore, there exist $z_0\in S_Z$ and $S\in S_{\call(Z, Y)}$ such that
$$\|S(z_0)\|=\|S\|,\quad \|S - U\| < \varepsilon_1 ,\quad \text{and}\quad \|x_0 - z_0\|<\varepsilon_1.$$
Now, since $\alpha:=\|U-S\|<\varepsilon_1$, then $\|U^{-1} S - \id_X\|<\|U^{-1}\|\alpha$, and using Newman series as in \cite[Page 193]{TL86}, we get
$$\|S^{-1} - U^{-1}\| \leq \|U^{-1}\|^2 \frac{\alpha}{1-\|U^{-1}\| \alpha} < \frac{(R/\rho)^2 \varepsilon_1}{1-\varepsilon_1 R/\rho},$$
since $\|U^{-1}\|= \|T^{-1}\| \|T\|\leq R/\rho$. Moreover, since $S$ attains its norm at $z_0$, the scalar multiples of $S^{-1}$ attain their minimum at $\frac{S(z_0)}{\|S(z_0)\|}=S(z_0)$. Note also that $\|S^{-1} - U^{-1}\|<\frac{(R/\rho)^2 \varepsilon_1}{1-\varepsilon_1 R/\rho}$, so in particular, since $T=mU^{-1}$, we have
$$\|\rho S^{-1} - T\|<\frac{R^2 \varepsilon_1 /\rho}{1-\varepsilon_1 R/\rho}<\varepsilon,$$
and clearly $m(\rho\cdot S^{-1})=\rho$. Finally, it remains to show that $\frac{S(z_0)}{\|S(z_0)\|}$ is close to $x$. Notice that $T(x)=x_0 \|T(x)\|$, and $z_0=T(t)$ for some $t\in X$. First note that
\begin{equation}\label{eq:b1}
\Big\|x - t\|T(x)\|\Big\| \leq \|T(x)\| \| T^{-1} \| \varepsilon_1 < (1+\delta) \varepsilon_1 < 2\varepsilon_1,
\end{equation}
since $1\leq \|T(x)\| \| T^{-1} \| < 1+\delta$. Now, since $t=U(z_0)\|T^{-1}\|$, we also have
\begin{equation}\label{eq:b2}
\Big\|\|T(x)\| \|T^{-1}\| U(z_0) - \|T(x)\| \|T^{-1}\| S(z_0)\Big\|\leq \|T(x)\| \| T^{-1} \| \varepsilon_1 < (1+\delta) \varepsilon_1 < 2\varepsilon_1.
\end{equation}
Finally, note that $\|S(z_0)\|=\|S\|=1$, so
\begin{equation}\label{eq:b3}
\Big\|\|T(x)\| \|T^{-1}\| S(z_0) - S(z_0)\Big\| = \left| \|T(x)\| \|T^{-1}\| - 1 \right| < \delta \leq \frac{\varepsilon}{2}.
\end{equation}

Using \eqref{eq:b1}, \eqref{eq:b2}, and \eqref{eq:b3}, by the triangle inequality we get
\begin{equation*}
\left\| x - \frac{S(z_0)}{\|S(z_0)\|}\right\| < 2\varepsilon_1 + 2\varepsilon_1 + \frac{\varepsilon}{2} < \varepsilon.\qedhere
\end{equation*}
\end{proof}


There are some immediate consequences of the previous result. First, recall that a Banach space $X$ has property $\beta$ of Lindenstrauss if there are two sets $\{x_\alpha:\, \alpha\in \Lambda\}\subset S_X$ and $\{x^*_\alpha:\, \alpha\in\Lambda\}\subset S_{X^*}$ and a number $0\leq \rho < 1$ such that the following conditions hold:
\begin{enumerate}
\item $x^*_\alpha(x_\alpha)=1$ for all $\alpha\in\Lambda$,
\item $|x^*_\alpha(x_\gamma)|\leq \rho<1$ if $\alpha,\gamma\in\Lambda$ are such that $\alpha\neq \gamma$,
\item $\|x\|=\sup_{\alpha\in\Lambda} |x^*_\alpha(x)|$, for all $x\in X$.
\end{enumerate}
This property is satisfied, for instance, for finite-dimensional polyhedral spaces, and for Banach spaces $X$ such that $c_0\subset X\subset \ell_\infty$, and property $\beta$ of Lindenstrauss is known to imply property $B$ of Lindenstrauss (\cite[Proposition 3]{Lindenstrauss63}). Moreover, if $X$ has property $\beta$ with constant $\rho<1$, then for all Banach spaces $Y$, the pair $(Y,X)$ has the BPBp with an $\eta$ uniquely determined by $\rho$ (\cite[Theorem 2.2]{AAGM08}), so we actually get the following.

\begin{corollary}
If $X$ and $Y$ are Banach spaces and $X$ has property $\beta$ of Lindenstrauss, then $(X, Y)$ has the bounded BPBp-$m$.
\end{corollary}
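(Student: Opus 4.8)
The plan is to obtain this as an immediate application of Theorem \ref{theo:bpbp-then-bpbpm}. Recall that the hypothesis of that theorem asks, for each $\varepsilon\in(0,1)$, for a \emph{single} number $\eta=\eta(\varepsilon)\in(0,1)$ with the property that $(T(X),X)$ has the BPBp with this $\eta$ for \emph{every} monomorphism $T\in\bl(X,Y)$. Since a bounded-below operator has closed range, each such $T(X)$ is a closed subspace of $Y$, hence a Banach space, so the statement ``$(T(X),X)$ has the BPBp'' is meaningful; the whole task is thus to produce a BPBp estimate for the pairs $(Z,X)$ that is uniform over all possible range spaces $Z$.

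This uniformity is exactly what property $\beta$ supplies. First I would fix a constant $\rho<1$ witnessing property $\beta$ for $X$. By \cite[Theorem 2.2]{AAGM08}, for \emph{every} Banach space $Z$ the pair $(Z,X)$ has the BPBp with a function $\eta=\eta(\varepsilon,\rho)$ that depends only on $\varepsilon$ and on $\rho$ --- in particular not on $Z$. Specializing $Z=T(X)$ for an arbitrary monomorphism $T\in\bl(X,Y)$, we conclude that $(T(X),X)$ has the BPBp with the common value $\eta(\varepsilon,\rho)$, which does not depend on $T$.

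Therefore the hypothesis of Theorem \ref{theo:bpbp-then-bpbpm} holds with $\eta(\varepsilon):=\eta(\varepsilon,\rho)$ (and if $\bl(X,Y)=\emptyset$ the hypothesis is vacuous, so there is nothing to check), and that theorem yields at once that $(X,Y)$ has the bounded BPBp-$m$. There is no genuine obstacle here: all of the analytic work is already packaged in Theorem \ref{theo:bpbp-then-bpbpm}, and the only point one needs to observe is that the BPBp constant coming from property $\beta$ is genuinely independent of the domain space, which is precisely the content of the cited result of Acosta, Aron, Garc\'{\i}a, and Maestre. (The same reasoning would in fact apply to any class of range spaces $X$ enjoying a ``property $B$'' estimate uniform in the domain, but property $\beta$ already suffices for the stated corollary.)
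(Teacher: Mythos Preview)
Your proposal is correct and follows exactly the paper's approach: invoke \cite[Theorem 2.2]{AAGM08} to obtain a BPBp function $\eta(\varepsilon,\rho)$ for the pairs $(Z,X)$ that depends only on the $\beta$-constant $\rho$ (and not on the domain $Z$), then apply Theorem~\ref{theo:bpbp-then-bpbpm}. The paper's justification is the one-line remark preceding the corollary, and your write-up simply spells out the same reasoning in more detail.
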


In particular, \cite[Theorem 1]{Partington82} implies that every Banach space can be equivalently renormed to be a universal domain for the bounded BPBp-$m$.

\begin{corollary}
Given any Banach space $X$, there exists a Banach space $Z$ isomorphic to $X$ such that $(Z,Y)$ has the bounded BPBp-$m$ for all Banach spaces $Y$.
\end{corollary}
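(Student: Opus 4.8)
The plan is to combine J. R. Partington's renorming theorem with the immediately preceding corollary, so no new work is required. First I would recall \cite[Theorem 1]{Partington82}: every Banach space admits an equivalent norm under which it enjoys property $\beta$ of Lindenstrauss (indeed, with a prescribed constant $\rho$ arbitrarily close to $1$, though any fixed $\rho<1$ suffices here). Applying this to the given space $X$, I obtain a Banach space $Z$, isomorphic to $X$, that has property $\beta$ of Lindenstrauss.

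Next I would invoke the preceding corollary: since $Z$ has property $\beta$ of Lindenstrauss, the pair $(Z,Y)$ has the bounded BPBp-$m$ for every Banach space $Y$. This is exactly the conclusion we want, so the proof ends in one line after the renorming step.

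Since the argument is entirely a citation chain, there is no genuine obstacle; the only point requiring a moment's care is that \cite[Theorem 1]{Partington82} is available in the generality we use, namely for an arbitrary (real or complex, possibly nonseparable) Banach space $X$ — which it is — so that the hypothesis of the preceding corollary is genuinely met by $Z$. I would state the proof in two sentences: ``By \cite[Theorem 1]{Partington82}, $X$ admits an equivalent norm with property $\beta$ of Lindenstrauss; let $Z$ be $X$ equipped with that norm. By the previous corollary, $(Z,Y)$ has the bounded BPBp-$m$ for every Banach space $Y$.''
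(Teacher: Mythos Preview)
Your proposal is correct and matches the paper's approach exactly: the paper states this corollary immediately after the property~$\beta$ corollary, prefaced by the sentence ``In particular, \cite[Theorem 1]{Partington82} implies that every Banach space can be equivalently renormed to be a universal domain for the bounded BPBp-$m$,'' which is precisely the two-step citation chain you describe.
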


Similarly, note that subspaces of uniformly convex Banach spaces are also uniformly convex, and the modulus of convexity of the subspace is bigger or equal than that of the original space. Therefore, due to the computations from \cite[Theorem 3.1]{KL14}, we get the following result, which significantly improves \cite[Theorems 3.5 and 3.8]{BR21a}.

\begin{corollary}\label{cor:Y-un-conv}
If $X$ and $Y$ are Banach spaces and $Y$ is uniformly convex, then $(X, Y)$ has the bounded BPBp-$m$.
\end{corollary}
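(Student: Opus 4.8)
The plan is to deduce this from Theorem \ref{theo:bpbp-then-bpbpm}. By that result, it suffices to produce, for each $\varepsilon\in(0,1)$, a single $\eta=\eta(\varepsilon)\in(0,1)$ such that for \emph{every} monomorphism $T\in\bl(X,Y)$, the pair $(T(X),X)$ has the BPBp with this $\eta$. Since $T$ is bounded below, its range $T(X)$ is a closed subspace of $Y$, hence a Banach space in its own right; moreover a subspace of a uniformly convex space is uniformly convex, and the modulus of convexity satisfies $\delta_{T(X)}(\varepsilon)\geq\delta_Y(\varepsilon)>0$ for every $\varepsilon\in(0,2]$, because the infimum defining $\delta_{T(X)}$ ranges over a subset of the pairs considered for $\delta_Y$.

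Next I would invoke \cite[Theorem 3.1]{KL14}, which asserts that whenever $Z$ is uniformly convex the pair $(Z,W)$ has the BPBp for every Banach space $W$. The point to extract from its proof is that the admissible $\eta$ depends only on $\varepsilon$ and on (a lower bound for) the modulus of convexity of $Z$, and does so monotonically: a larger modulus yields a larger admissible $\eta$. Consequently the value $\eta(\varepsilon)$ computed from the modulus $\delta_Y$ of $Y$ is simultaneously valid for every $Z$ with $\delta_Z\geq\delta_Y$ --- in particular for $Z=T(X)$ as $T$ ranges over all monomorphisms in $\bl(X,Y)$, with $W=X$. This furnishes exactly the uniform $\eta=\eta(\varepsilon)$ demanded by the hypothesis of Theorem \ref{theo:bpbp-then-bpbpm}, and that theorem then yields that $(X,Y)$ has the bounded BPBp-$m$. (The case $\rho=0$ never has to be treated separately here: it corresponds to non-monomorphisms and is already absorbed into Theorem \ref{theo:bpbp-then-bpbpm} via \cite[Lemma 3.7]{Chak20}.)

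The main obstacle is precisely the monotone-dependence claim above: one must check, by tracking the quantitative estimates in the proof of \cite[Theorem 3.1]{KL14}, that the $\eta$ produced there is given by an explicit expression in $\varepsilon$ and $\delta_Z$ that is nondecreasing in $\delta_Z$, so that replacing $\delta_{T(X)}$ by the smaller $\delta_Y$ only shrinks $\eta$ while keeping it valid. Everything else --- that $T(X)$ is closed and uniformly convex with $\delta_{T(X)}\geq\delta_Y$, and the concluding application of Theorem \ref{theo:bpbp-then-bpbpm} --- is routine.
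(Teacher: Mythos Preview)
Your proposal is correct and follows the same approach as the paper: the paper also deduces the corollary from Theorem~\ref{theo:bpbp-then-bpbpm} by observing that closed subspaces of $Y$ are uniformly convex with modulus at least $\delta_Y$, and that the computations in \cite[Theorem~3.1]{KL14} produce an $\eta$ depending only on $\varepsilon$ and (a lower bound for) the modulus of convexity of the domain, hence uniformly over all $T(X)$. Your explicit articulation of the monotone dependence of $\eta$ on the modulus is exactly the point the paper alludes to with the phrase ``due to the computations from \cite[Theorem~3.1]{KL14}.''
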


The Banach spaces $Y$ such that $(\ell_1^2,Y)$ has the BPBp are characterized as those that satisfy a geometrical property called the \textit{Approximate Hyperplane Series Property for pairs} (see \cite{AAGM08,ACKLM15}). On the other hand, it has been shown already that if $X$ is any Banach space, then $(X, \ell_1^2)$ satisfies the \textbf{L}$_{o,o}$-$m$ and the bounded BPBp-$m$. Moreover, note that if $\dim(X)\neq 2$, then $(X, \ell_1^2)$ has the BPBpp-$m$ by \cite[Lemma 3.7]{Chak20}, so the following question is natural: is it true for every Banach space $X$ that $(X, \ell_1^2)$ has the BPBpp-$m$? Also, note that if $X$ and $Y$ are both finite-dimensional, then $(X, Y)$ has both the \textbf{L}$_{o,o}$-$m$ and the bounded BPBp-$m$. This motivates to wonder the following: is it true for every pair of finite-dimensional Banach spaces $X$ and $Y$ that $(X, Y)$ has the BPBpp-$m$? We will answer these questions in the negative.

\begin{theorem}\label{theorem:ell22,ell12}
The pair $(\ell_2^2, \ell_1^2)$ fails to have the BPBpp-$m$ in the real case.
\end{theorem}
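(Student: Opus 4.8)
The plan is to exhibit, for a well-chosen $\varepsilon>0$, a sequence of operators $T_n\in\call(\ell_2^2,\ell_1^2)$ with $m(T_n)=1$ and unit vectors $x_n\in S_{\ell_2^2}$ such that $\|T_n(x_n)\|\to 1$, but for which \emph{no} operator $S$ with $m(S)=\|S(x_n)\|=1$ can be $\varepsilon$-close to $T_n$. The natural candidates are small perturbations of a multiple of the identity (or of an isometry) on $\ell_2^2$: for $T$ close to the identity, $T(B_{\ell_2^2})$ is close to the round disk, and the point(s) at which $\|T(x)\|$ is minimized on $S_{\ell_2^2}$ are determined by the geometry of this ellipse-like body relative to the hexagonal unit ball of $\ell_1^2$ (whose norm is largest along the diagonals and smallest along the axes). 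The key tension to exploit is that in $(\ell_2^2,\ell_1^2)$ the minimum of $\|T(\cdot)\|$ over $S_{\ell_2^2}$ is, for generic $T$ near the identity, attained at a \emph{unique} $x$ (up to sign), and that requiring $\|S(x_n)\|=m(S)$ at a \emph{prescribed} point $x_n$ forces $S$ to differ from $T_n$ by a definite amount — because the constraint "minimum attained exactly at $x_n$" is, in a quantitative sense, a codimension-one condition that $T_n$ violates by a fixed amount while the approximating datum $\|T_n(x_n)\|<1+\eta$ only pins down $x_n$ to within $O(\sqrt\eta)$ of the true minimizer.

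Concretely, first I would fix the identity-type operator $T_0=\id$, compute $m(\id)$ and locate its minimizing directions: on $S_{\ell_2^2}$ the $\ell_1$-norm $\|(\cos\theta,\sin\theta)\|_1=|\cos\theta|+|\sin\theta|$ ranges between $1$ (at the axes) and $\sqrt2$ (at the diagonals), so $m(\id)=1$, attained exactly at $\pm e_1,\pm e_2$. Then I would \emph{break the symmetry}: let $T_n = D_n$ be a diagonal operator $\operatorname{diag}(1,1+\tfrac1n)$ (suitably normalized so $m(T_n)=1$), so that the minimum of $\|T_n(\cdot)\|_1$ on the sphere is now attained \emph{only} near $\pm e_1$ and strictly exceeds $1$ near $e_2$. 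Choose $x_n$ to be the vector near $e_2$ — or better, a fixed $x_n=x_0$ bounded away from all genuine minimizers of every nearby operator — at which $\|T_n(x_n)\|_1$ is made to tend to $1$ by a competing rescaling; the point is to arrange $\|T_n(x_n)\|<1+\eta$ for every $\eta$ eventually, while the true minimizer of $T_n$ (and of every $S$ within $\varepsilon$ of $T_n$, for $\varepsilon$ small) stays uniformly far from $x_n$. Then any $S$ with $\|S-T_n\|<\varepsilon$ has its minimizing set contained in a small neighborhood of $\{\pm e_1\}$ by a continuity/perturbation argument, hence $\|S(x_n)\|>m(S)$, contradicting the BPBpp-$m$ requirement $\|S(x_n)\|=m(S)=1$.

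The main obstacle — and the place where the real-scalar hypothesis and the specific spaces $\ell_2^2,\ell_1^2$ matter — is making the two competing requirements on $x_n$ genuinely incompatible: one needs $\|T_n(x_n)\|$ to approach $m(T_n)=1$ (the BPB hypothesis) while simultaneously $x_n$ stays uniformly bounded away from the set of near-minimizers of $T_n$ (so that no small perturbation $S$ can move its minimum to $x_n$). This forces a careful choice of the family $T_n$: a pure diagonal dilation won't do, since there the hypothesis $\|T_n(x_n)\|\to m(T_n)$ already forces $x_n\to\{\pm e_1\}$. Instead I expect one must use operators whose image of $S_{\ell_2^2}$ is an ellipse positioned so that it touches $\tfrac1{m}S_{\ell_1^2}$ tangentially at a vertex direction of the hexagon while being uniformly far from touching it along the axis directions — exploiting that the $\ell_1^2$ ball has \emph{corners}, so the minimum of $\|T(\cdot)\|$ can be "almost attained" at a direction hitting a corner of $B_{\ell_1^2}$ without being attained there, and that resolving this near-attainment into exact attainment at that same direction requires a perturbation of fixed size. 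I would quantify all of this with an explicit $\varepsilon$ (something like $\varepsilon=\tfrac1{10}$) and explicit estimates on the modulus of the minimum function $\theta\mapsto\|T(\cos\theta,\sin\theta)\|_1$, and then the contradiction is a compactness-free, fully elementary planar computation.
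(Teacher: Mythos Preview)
Your proposal remains a sketch with no concrete construction, and it misses the structural observation that makes the argument short. You correctly see that diagonal perturbations of the identity will not work and that one should exploit the corners of $B_{\ell_1^2}$, but your geometric description is garbled (the $\ell_1^2$ ball is a diamond with the four vertices $(\pm1,0),(0,\pm1)$, not a hexagon), and the ``continuity/perturbation argument'' you invoke --- that every $S$ with $\|S-T_n\|<\varepsilon$ has its minimizing set in a small neighborhood of that of $T_n$ --- would need quantitative estimates uniform over $n$, which you do not supply and which are genuinely delicate (the minimizing set of $\theta\mapsto\|S(\cos\theta,\sin\theta)\|_1$ can bifurcate as $S$ varies). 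As written, you have neither a candidate family $T_n$ nor a mechanism forcing $\|S-T_n\|$ to be bounded below.

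The rigidity you are missing is this: if $S\in\call(\ell_2^2,\ell_1^2)$ satisfies $m(S)=\|S(x)\|=1$, then $S$ is invertible, $S^{-1}:\ell_1^2\to\ell_2^2$ attains its norm at $S(x)\in S_{\ell_1^2}$, and since $\ell_2^2$ is strictly convex this forces $S(x)$ to be an \emph{extreme point} of $B_{\ell_1^2}$, i.e.\ $S(x)\in\{(\pm1,0),(0,\pm1)\}$. With this in hand no perturbation analysis is needed: it suffices to build $T_n$ with $m(T_n)=1$ and a fixed $x=e_1$ so that $\|T_n(e_1)\|_1\to 1$ while $T_n(e_1)$ stays at $\ell_1$-distance $\geq 1$ from all four vertices. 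The paper does exactly this, sending $e_1\mapsto(\tfrac12+\tfrac1n,\tfrac12+\tfrac1n)$ (which has $\ell_1$-norm $1+\tfrac2n$ and $\ell_1$-distance exactly $1$ to the nearest vertex) and choosing $T_n(e_2)$ so that the image ellipse $T_n(S_{\ell_2^2})$ passes through $(\pm1,0),(0,\pm1)$, which forces $m(T_n)=1$. Then for every admissible $S$ one has $\|S-T_n\|\geq\|S(e_1)-T_n(e_1)\|_1\geq 1$, contradicting $\|S-T_n\|<\varepsilon$ for any $\varepsilon<1$.
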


\begin{proof}
Note that an isomorphism $S\in\call(\ell_1^2, \ell_2^2)$ can only attain its norm at the points $(1,0)$, $(-1,0)$, $(0,1)$, and $(0,-1)$, as $\ell_2^2$ is strictly convex. For each $n\in\bbn$, we will define a linear operator $T_n\in\call(\ell_2^2, \ell_1^2)$ that maps $S_{\ell_2^2}$ into the unique ellipse containing the points $(1,0)$, $(-1,0)$, $(0,1)$, $(0,-1)$, and $\left( \frac{1}{2}+\frac{1}{n}, \frac{1}{2}+\frac{1}{n} \right)$ (see Figure~\ref{fig:elipse}), which satisfies the equation
$$-((-8-8n+2n^2)xy)+(2+n)^2(1-x^2-y^2)=0.$$
Moreover, we will make sure that 
$$T_n(e_1)=\left( \frac{1}{2}+\frac{1}{n}, \frac{1}{2}+\frac{1}{n} \right)$$ 
and 
$$T_n(e_2)=\left( \frac{-(n+2)}{4\sqrt{n+1}}, \frac{(n+2)}{4\sqrt{n+1}} \right),$$
which belong to that ellipse indeed. In other words, for each $(x,y)\in\ell_2^2$, we define
$$T_n(x,y):=\left( \frac{n+2}{2n}x - \frac{(n+2)}{4\sqrt{n+1}}y, \frac{n+2}{2n}x + \frac{(n+2)}{4\sqrt{n+1}}y \right).$$

If we solve the corresponding systems of linear equations, we can easily verify that the preimages by $T$ of the points 
$(1,0)$, $(-1,0)$, $(0,1)$, and $(0,-1)$ of $B_{\ell_1^2}$ are, respectively, the points $\left( \frac{n}{n+2},-\frac{2\sqrt{n+1}}{n+2} \right)$, $\left( -\frac{n}{n+2},\frac{2\sqrt{n+1}}{n+2} \right)$, $\left( \frac{n}{n+2},\frac{2\sqrt{n+1}}{n+2} \right)$, and $\left( -\frac{n}{n+2},-\frac{2\sqrt{n+1}}{n+2} \right)$, which clearly belong to $B_{\ell_2^2}$. Note that, by construction, $T_n(B_{\ell_2^2})$ is a region (delimited by an ellipse) that contains $B_{\ell_1^2}$, and the ellipse $T_n(S_{\ell_2^2})$ intersects $S_{\ell_1^2}$ only at the points $(1,0)$, $(-1,0)$, $(0,1)$, and $(0,-1)$ (due to its strict convexity). Thus, it is geometrically clear (and can be checked analytically) that $m(T_n)=1$, and that $\|T_n(e_1)\|=2\left(\frac{n+2}{2n}\right)=1+\frac{2}{n}$.

\begin{figure}[H]
\centering
\includegraphics[width=\textwidth]{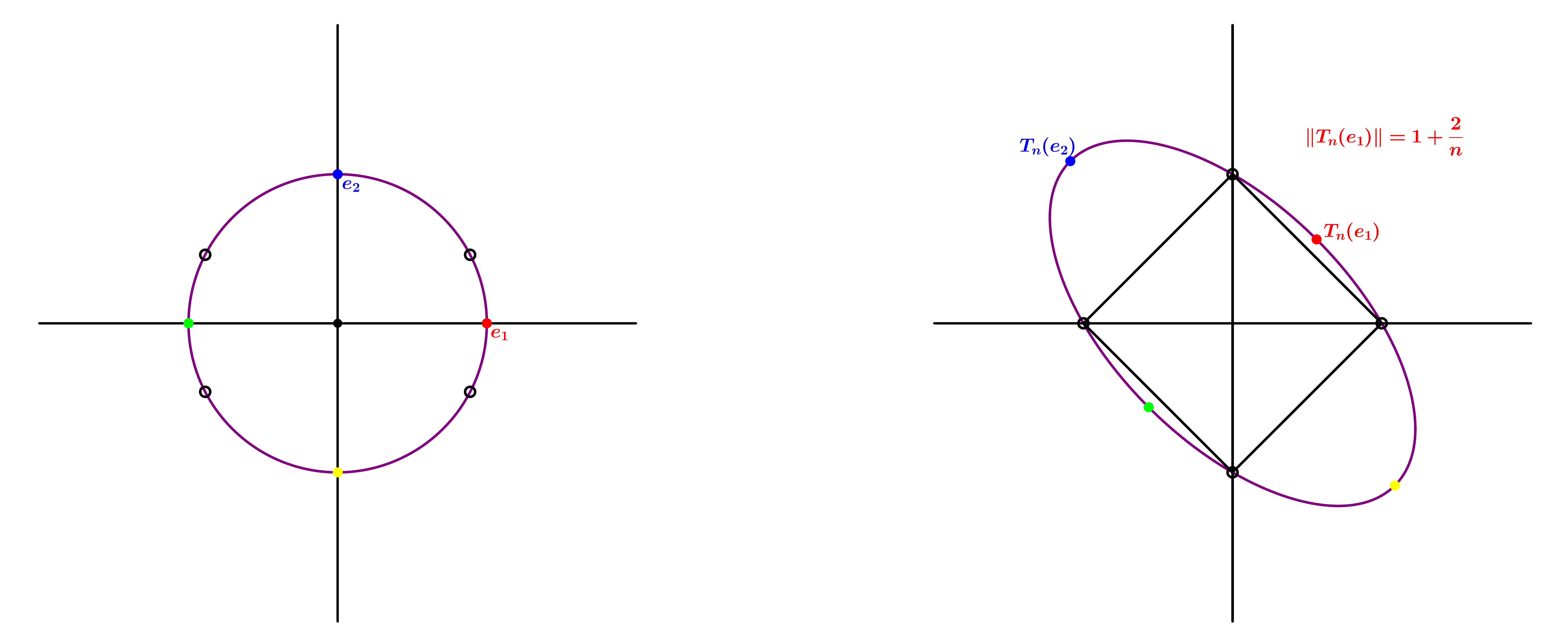}
\caption{Construction of $T_n\in\call(\ell_2^2, \ell_1^2)$}
\label{fig:elipse}
\end{figure}

Now, we will argue by contradiction. Suppose that $(\ell_2^2, \ell_1^2)$ has the BPBpp-$m$. Then for all $\varepsilon\in (0,1)$, there must be some $\eta(\varepsilon)\in (0,1)$ (converging to $0$ with $\varepsilon$) such that whenever an operator $T\in\call(\ell_2^2, \ell_1^2)$ and $x\in S_{\ell_2^2}$ satisfy that $m(T)=1$ and $\|T(x)\|<1+\eta(\varepsilon)$, then there is an operator $S\in\call(\ell_2^2, \ell_1^2)$ satisfying that
$$m(S)=\|S(x)\|=m(T)=1,\quad \text{and}\quad \|S-T\|<\varepsilon.$$
Fix $0<\varepsilon<1$ and suppose that such $\eta(\varepsilon)$ exists. There must exist some $n\in\bbn$ such that $\eta(\varepsilon)\geq \frac{2}{n}$. Take now $T=T_n$ and $x=(1,0)\in B_{\ell_2^2}$, and let $S$ be given as before. Since $S$ is an isomorphism that attains its minimum at $(1,0)$, in particular $S^{-1}$ is an isomorphism that attains its norm at the point $\frac{S((1,0))}{\|S((1,0)))\|}$. But $S^{-1}$ can only attain its norm at the points $(1,0)$, $(-1,0)$, $(0,1)$, and $(0,-1)$ of $B_{\ell_1^2}$. In particular, $S((1,0))$ must be one of those points. Note now that the distances from those $4$ points to $T_n((1,0))=\left( \frac{1}{2}+\frac{1}{n}, \frac{1}{2}+\frac{1}{n} \right)$ are bounded below by $1$, so we must have
$$\varepsilon>\|T-S\|\geq \|(T-S)(1,0)\|\geq 1,$$
which is a contradiction. Since this can be done for every $n\in\bbn$ and every $\varepsilon\in (0,1)$, the claim is proven.
\end{proof}

In particular, we get the following.

\begin{corollary}
There are Banach spaces $(X,Y)$ satisfying the \textbf{L}$_{o,o}$-$m$ and the bounded BPBp-$m$, but not the BPBpp-$m$.
\end{corollary}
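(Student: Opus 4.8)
The plan is to show that $(\ell_2^2,\ell_1^2)$ fails the BPBpp-$m$ by exhibiting, for arbitrarily small $\varepsilon$, a sequence of operators $T_n$ with $m(T_n)=1$ and a fixed point $x\in S_{\ell_2^2}$ at which $\|T_n(x)\|$ is close to $1$, but such that no operator $S$ with $m(S)=1$ and $\|S-T_n\|<\varepsilon$ attains its minimum at $x$. The guiding idea is the duality between norm attainment and minimum attainment for invertible operators: an isomorphism $S\colon\ell_2^2\to\ell_1^2$ attains its minimum at $x\in S_{\ell_2^2}$ precisely when $S^{-1}$ attains its norm at $S(x)/\|S(x)\|\in S_{\ell_1^2}$. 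Since $\ell_2^2$ is strictly convex, $S^{-1}$ (if it attains its norm) can only do so at the four extreme points $(\pm1,0),(0,\pm1)$ of $B_{\ell_1^2}$; hence $S(x)/\|S(x)\|$ must be one of those four points. This is a rigidity constraint that will clash with the choice of $x$.

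The main construction is geometric. For each $n$, I would define $T_n$ so that it maps $B_{\ell_2^2}$ onto the region bounded by the unique ellipse through the five points $(\pm1,0),(0,\pm1)$ and $\bigl(\tfrac12+\tfrac1n,\tfrac12+\tfrac1n\bigr)$; concretely one can take $T_n(e_1)=\bigl(\tfrac12+\tfrac1n,\tfrac12+\tfrac1n\bigr)$ and $T_n(e_2)=\bigl(-\tfrac{n+2}{4\sqrt{n+1}},\tfrac{n+2}{4\sqrt{n+1}}\bigr)$, giving the explicit formula for $T_n(x,y)$. One then checks that the ellipse $T_n(S_{\ell_2^2})$ is externally tangent to $S_{\ell_1^2}$ at exactly the four extreme points and otherwise lies outside $B_{\ell_1^2}$, which forces $m(T_n)=1$; and one computes $\|T_n(e_1)\|_1=2\cdot\tfrac{n+2}{2n}=1+\tfrac2n$. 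Taking $x=e_1=(1,0)$, the hypothesis $\|T_n(x)\|<1+\eta(\varepsilon)$ holds once $n$ is large enough that $\tfrac2n<\eta(\varepsilon)$.

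Now suppose for contradiction that the BPBpp-$m$ holds, with modulus $\eta(\varepsilon)$. Pick such a large $n$, apply the property to $T=T_n$ and $x=e_1$, and get $S$ with $m(S)=\|S(e_1)\|=1$ and $\|S-T_n\|<\varepsilon$. Then $S$ is an isomorphism attaining its minimum at $e_1$, so $S(e_1)/\|S(e_1)\|=S(e_1)$ is a norm-attaining point of $S^{-1}$, hence one of $(\pm1,0),(0,\pm1)$. But each of those four points is at $\ell_1$-distance at least $1$ from $T_n(e_1)=\bigl(\tfrac12+\tfrac1n,\tfrac12+\tfrac1n\bigr)$, so
\[
\varepsilon>\|T_n-S\|\geq \|(T_n-S)(e_1)\|=\|T_n(e_1)-S(e_1)\|\geq 1,
\]
contradicting $\varepsilon<1$. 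Since $\varepsilon\in(0,1)$ was arbitrary, $(\ell_2^2,\ell_1^2)$ fails the BPBpp-$m$.

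The step I expect to be the main obstacle is verifying rigorously that $m(T_n)=1$, i.e. that the image ellipse is tangent to $S_{\ell_1^2}$ exactly at the four vertices and does not dip inside $B_{\ell_1^2}$ anywhere else; this requires either solving the tangency conditions for the conic $-(-8-8n+2n^2)xy+(2+n)^2(1-x^2-y^2)=0$ or a convexity argument comparing the ellipse with the diamond $B_{\ell_1^2}$. Everything else — the explicit matrix of $T_n$, the preimages of the vertices $\bigl(\pm\tfrac{n}{n+2},\pm\tfrac{2\sqrt{n+1}}{n+2}\bigr)$ lying in $B_{\ell_2^2}$, the value of $\|T_n(e_1)\|_1$, and the distance estimate — is routine computation that can be stated without full detail.
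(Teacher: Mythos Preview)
Your proposal is correct and follows essentially the same approach as the paper: the example is $(\ell_2^2,\ell_1^2)$, and your construction of the operators $T_n$, the choice $x=e_1$, and the contradiction via the rigidity of norm-attaining points for $S^{-1}\colon\ell_1^2\to\ell_2^2$ match the paper's argument almost verbatim (including the explicit matrix entries and the ellipse equation). The paper likewise states that $m(T_n)=1$ is ``geometrically clear (and can be checked analytically)'' without carrying out the full verification, so your identification of this as the main step needing care is accurate. The only thing you leave implicit is that finite-dimensionality of both spaces gives the $\mathbf{L}_{o,o}$-$m$ and the bounded BPBp-$m$, which the paper supplies via earlier results; you should cite those to complete the corollary.
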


As a final remark, in the case of the norm, recall that there is a natural strengthening of the \textbf{L}$_{o,o}$ considered in the literature, namely the Bishop-Phelps-Bollob\'as operator property (or just BPBop), which is defined like the \textbf{L}$_{o,o}$ but where $\eta=\eta(\varepsilon)$ does not depend on the operator. This property is known to be very restrictive:
\begin{enumerate}
\item $(X,\bbk)$ has the BPBop if and only if $X$ is uniformly convex (see \cite[Theorem 2.1]{KL14}).
\item $(X,Y)$ always fails the BPBop if $X$ and $Y$ both have dimension at least $2$ (see \cite[Theorem 2.1]{DKKLM20}).
\end{enumerate}

It is natural to wonder if we can also consider a uniform strengthening of the \textbf{L}$_{o,o}$-$m$ similarly, but it turns out that this property is even more restrictive than its norm counterpart.

\begin{proposition}
Let $X$ be a Banach space of dimension at least $2$. Then there is no nontrivial Banach space $Y$ such that the following property holds: ``for every $\varepsilon\in (0,1)$ and every $\rho\geq 0$, there exists $\eta(\varepsilon, \rho)\in (0,1)$ (converging to $0$ with $\varepsilon$) satisfying that whenever $T\in\call(X,Y)$ with $m(T)=\rho$ and $x\in S_X$ are such that $\|T(x)\|<\rho (1+\eta(\varepsilon,\rho))$, if $\rho>0$, or $\|T(x)\|<\eta(\varepsilon, \rho)$, if $\rho=0$, there exists $y\in S_X$ such that $m(T)=\|T(y)\|=\rho$, and $\|x-y\|<\varepsilon$''.
\end{proposition}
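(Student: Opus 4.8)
The plan is a proof by contradiction built around one very soft family of test operators: scalar multiples of a fixed norm-one functional on $X$, post-composed with the embedding $\bbk\hookrightarrow Y$ onto a fixed unit vector. The point of this choice is that such an operator has small norm, so the almost-minimizing hypothesis ``$\|T(x)\|<\eta$'' is met for \emph{every} $x\in S_X$, while the set on which it attains its minimum (which is $0$) is a \emph{fixed} proper closed subspace; picking $x_0$ far from that subspace then breaks any uniform choice of $\eta$. Note that this essentially reduces the problem to the case $Y=\bbk$, which is the genuinely governing case and already explains why the property is strictly more restrictive than the BPBop (which is vacuous for one-dimensional range).

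First I would fix, using that $Y\neq\{0\}$ and $X\neq\{0\}$, a vector $y_0\in S_Y$ and a functional $\phi\in S_{X^*}$, and record two consequences of $\dim X\geq 2$: (i) $N:=\ker\phi$ is a nonzero closed subspace, so $N\cap S_X\neq\emptyset$ and $\inf_{x\in S_X}|\phi(x)|=0$; and (ii) since $\sup_{x\in S_X}|\phi(x)|=\|\phi\|=1$, there is $x_0\in S_X$ with $|\phi(x_0)|>\tfrac12$. Assuming the displayed property holds, I would apply it with $\varepsilon=\tfrac12$ to obtain $\eta:=\eta(\tfrac12,0)\in(0,1)$, and then consider
$$T:=\tfrac{\eta}{2}\,\phi(\cdot)\,y_0\in\call(X,Y),\qquad\text{so that}\qquad \|T(x)\|=\tfrac{\eta}{2}\,|\phi(x)|\ \text{ for all }x\in X.$$
From (i) this gives $m(T)=0$, and $\|T(x_0)\|=\tfrac{\eta}{2}|\phi(x_0)|\leq\tfrac{\eta}{2}<\eta$, so $(T,x_0)$ satisfies the hypothesis of the property in the case $\rho=0$.

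The contradiction is then one line: the property yields $y\in S_X$ with $\|T(y)\|=m(T)=0$ and $\|x_0-y\|<\tfrac12$; but $\|T(y)\|=0$ forces $\phi(y)=0$, i.e.\ $y\in N$, whence $\|x_0-y\|\geq|\phi(x_0-y)|=|\phi(x_0)|>\tfrac12$, a contradiction. Since $Y\neq\{0\}$ was arbitrary, this proves the proposition.

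I do not anticipate a serious obstacle. The only genuine insight is that the operators one must test are those of \emph{small} norm rather than normalized ones, so that the almost-minimizing hypothesis is automatically satisfied while the ``minimum set'' remains a fixed proper subspace bounded away from $x_0$; this is precisely what a uniform $\eta$ cannot handle. Everything else is elementary, and the only choices to be made are cosmetic (taking $\varepsilon=\tfrac12$ and $|\phi(x_0)|>\tfrac12$ rather than optimizing constants, and reading ``nontrivial'' as $Y\neq\{0\}$, which yields the strongest statement and makes the contrast with the BPBop explicit).
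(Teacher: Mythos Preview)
Your proof is correct and follows essentially the same approach as the paper's: both exploit that a non-injective operator scaled to have small norm makes the almost-minimizing hypothesis automatic for every $x\in S_X$, while the minimum set $\{y:\|T(y)\|=0\}=\operatorname{Ker}(T)$ is a fixed proper closed subspace from which some unit vector lies at distance at least $\varepsilon$. The paper works with an arbitrary non-injective $T$ and invokes Riesz' lemma to find that far-away unit vector, whereas you build a specific rank-one $T=\tfrac{\eta}{2}\phi(\cdot)y_0$ and use $\|x_0-y\|\geq|\phi(x_0-y)|$ in place of Riesz; this is a cosmetic variation, not a different strategy.
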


\begin{proof}
Given $\varepsilon\in (0,1)$, suppose that $\eta(\varepsilon,0)\in (0,1)$ exists. 

Fix $\varepsilon<1$. Let $T\in\call(X,Y)$ be a non-injective (and non-zero) operator such that $\|T\|\leq \frac{\eta(\varepsilon, 0)}{2}$. By Riesz' lemma, there is some $u\in S_X$ such that $\operatorname{dist}(u, \operatorname{Ker}(T))>\varepsilon$. However, as $\|T(u)\|\leq \frac{\eta(\varepsilon, 0)}{2}<\eta(\varepsilon, 0)$, there must be a point $y$ such that $\|u-y\|<\varepsilon$ and $\|T(y)\|=0$. This is a contradiction.
\end{proof}

\noindent\textbf{Acknowledgements:} \\
All authors have been supported by MICIU / AEI / 10.13039 / 501100011033 and ERDF/EU through the grants PID2021-122126NB-C33 (first, second, and fourth author) and PID2021-122126NB-C31 (third author). The first and second authors have also been supported by PROMETEU/2021/070. The third author has also been supported by “Maria de Maeztu” Excellence Unit IMAG, reference CEX2020-001105-M funded by MICIU/AEI/10.13039/501100011033. The fourth author has also been supported by the Basic Science Research Program, National Research Foundation of Korea (NRF), Ministry of Education, Science and Technology [NRF-2020R1A2C1A01010377].

\end{document}